\documentclass[11pt]{amsart}
\usepackage{amsmath, amssymb, amsthm}
\usepackage{hyperref}
\usepackage{tikz}
\usepackage{subfigure}
\usepackage{ytableau}
\usepackage{array}
\usepackage{amsfonts}
\usepackage{mathtools}
\theoremstyle{plain}
\newtheorem{theorem}{Theorem}[section]
\newtheorem{lemma}[theorem]{Lemma}
\newtheorem{corollary}[theorem]{Corollary}
\newtheorem{proposition}[theorem]{Proposition}
\newtheorem{conjecture}[theorem]{Conjecture}
\newtheorem{definition}{Definition}

\newtheorem{remark}[]{Remark}
\newtheorem{claim}[]{Claim}

\renewcommand{\Im}{\operatorname{Im}}

\numberwithin{equation}{section}
\begin{document}
	\title[Zeros in the character table of the wreath product of the symmetric group
	]
	{Zeros in the character table of the wreath product of the symmetric group} 
	\author{JAYANTA BARMAN}
	\address{JAYANTA BARMAN\\ Department of Mathematics \\
		Indian Institute of Technology Kharagpur \\
		Kharagpur-721302,  India.} 
	\email{b1999jayanta@gmail.com, b1999jayanta@kgpian.iitkgp.ac.in}
	
	\author{Kamalakshya Mahatab}
	\address{Kamalakshya Mahatab\\ Department of Mathematics \\
		Indian Institute of Technology Kharagpur \\
		Kharagpur-721302,  India.} 
	\email{accessing.infinity@gmail.com, kamalakshya@maths.iitkgp.ac.in}
	
	\subjclass[2020]{11P82, 11N37}
	\keywords{  Saddle-point method, Zeros in the character table, $k$-core multipartitions, Wreath product of symmetric groups, Weyl groups }
	\begin{abstract} 
		Let $G$ be a finite group with $t$ conjugacy classes, and let $S_N$ be the symmetric group. Let $Z_t(N)$ be the
		number of zeros in the character table of the wreath product
		$G\wr S_N$. We prove 
		\begin{equation*}
			Z_t(N)\ge \frac{2p_t(N)^{2}}{\log \frac{N}{t}}\left(1+O\left(\frac{\log \log \frac{N}{t}}{\log \frac{N}{t}}\right)\right),
		\end{equation*}
		where $p_t(N)$ is the number of $t$-multipartitions of $N$.
	\end{abstract}
	
	\maketitle
	\section{Introduction}\label{section1}
	Character values of finite groups play an important role in combinatorics, number theory, and representation theory. For various classes of finite groups, several methods have been developed to compute their character values. 
	While linear characters never vanish, a classical theorem of Burnside \cite{burnside1904arithmetical} states that every non-linear irreducible character of a finite group takes the value zero on some group element. Motivated by this phenomenon, considerable attention has been devoted to understanding the distribution of zeros and divisibility properties of character tables. For example, Peluse and Soundararajan \cite{peluse} proved that, for any fixed prime power, almost all entries in the character table of $S_N$ are divisible by that prime power. Let $Z(N)$ denote the number of zeros in the character table of $S_N$, and let $p(N)$ denote the number of partitions of $N$. Recently, Miller and Scheinerman \cite{miller} conducted a large-scale Monte Carlo simulation to determine the density of zeros in the character table of $S_N$ for large values of $N$, leading to the following conjecture:
	\begin{conjecture}
		$\frac{Z(N)}{p(N)^{2}} \sim  \frac{2}{\log N}$ as $N \to \infty$.
	\end{conjecture}
	In \cite{barman1}, we proved a lower bound matching the order in the above conjecture:
	\begin{align}\label{eq-zerosSN}
		Z(N) \ge \frac{2\,p(N)^2}{\log N} \left(1 + O\left(\frac{\log\log N}{\log N}\right)\right).
	\end{align}
	Our proof combines an explicit version of Tyler's theorem \cite{tyler} for $c(N,k)$ with a result of  Erdős and Lehner \cite{erdos} for $p(N,k)$, where $c(N,k)$ denotes the number of $k$-core partitions of $N$, and $p(N,k)$ denotes the number of partitions of $N$ in which no summand exceeds $k$.
	
	In this paper, we generalize the above lower bound to $G\wr S_N$, where $G$ is a finite group with $t$ conjugacy classes. For our proof, we require $c_t(N,k)$, the number of $t$-multipartitions of $N$ in which each component $\lambda^{(j)}$ is a $k$-core partition and $p_t(N)$, the number of $t$-multipartitions of $N$. We discuss these below.
	
	\subsection{Conjugacy classes of
		\texorpdfstring{$G\wr S_N$}{G wr S\_N}}
	Recall that the number of conjugacy classes of $S_N$ equals the number
	of its irreducible representations, which is precisely the number of
	partitions of $N$, denoted by $p(N)$. James and Kerber
	\cite[Corollary 4.4.4]{MR644144} showed that the number of irreducible
	representations of the wreath product $G\wr S_N$ is given by
	\begin{equation*}
		\sum_{\substack{N=n_1+n_2+\cdots+n_t\\ n_j\geq 0}}
		p(n_1)p(n_2)\cdots p(n_t).
	\end{equation*}
	This sum is precisely the number of $t$-multipartitions of $N$. There are also applications of $p_t(N)$ in the representation theory of Lie algebras \cite{bouwknegt,fayers}. In \cite{bouwknegt}, multipartitions play an important role in the study of Durfee systems and in establishing their existence.

	A $t$-multipartition of a positive integer $N$ is a sequence $\lambda = (\lambda^{(1)}, \lambda^{(2)}, 
	\dots, \lambda^{(t)})$, where each $\lambda^{(j)}$ is an integer partition (possibly empty) for all $j$ such that $\sum_{j=1}^{t}\left|\lambda^{(j)}\right|=N$. Here, $\left|\lambda^{(j)}\right|$ denotes the sum of the parts of the partition $\lambda^{(j)}$. Asymptotic formulas and structural
	properties of $p_t(N)$ have been extensively studied in
	\cite{andrews,atkin,barman3,murty}. Note that $p_t(N)$ coincides with the number of $t$-colored partitions
	of $N$. Various properties of $p_t(N)$,
	including log-concavity, arithmetic properties, and multiplicative
	properties, have also been studied in
	\cite{bringmann,bringmann1,chern}. In \cite{barman3}, we proved the following asymptotic formula for fixed
	$t$.
	\begin{theorem}[{\cite[Corollary 1.4]{barman3}}]\label{thm-murty}
		Let $t$ be any fixed
		positive integer. Then,
		\begin{equation*}
			p_{t}(N)=\left(\frac{t}{24}\right)^{\frac{t+1}{4}}\frac{\exp\left(\frac{2\pi}{\sqrt{6}}\sqrt{Nt}\right)}{\sqrt{2}N^{\frac{t+3}{4}}}\left(1+O\left(N^{-\frac{1}{2}}\right)\right).
		\end{equation*}  
	\end{theorem}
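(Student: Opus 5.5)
The approach is the saddle-point method applied to the generating function
\begin{equation*}
F(q)=\sum_{N\ge0}p_t(N)q^N=\prod_{n\ge1}(1-q^n)^{-t}=\left(\frac{q^{1/24}}{\eta(\tau)}\right)^{t},\qquad q=e^{2\pi i\tau}.
\end{equation*}
By Cauchy's formula,
\begin{equation*}
p_t(N)=\int_{-1/2}^{1/2}F(e^{-y+2\pi i x})\,e^{N(y-2\pi i x)}\,dx.
\end{equation*}
Here $y>0$ is chosen near the saddle point, so $y=\pi\sqrt{t/(6N)}$. Alternatively one could invoke a Meinardus-type theorem for the Dirichlet series $t\zeta(s)$, but I will carry out the direct circle-method computation, since the claimed error is $O(N^{-1/2})$.

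\textbf{Step 1 (major arc).} Use the modular transformation $\eta(-1/\tau)=\sqrt{-i\tau}\,\eta(\tau)$. With $z=y-2\pi i x$ this gives
\begin{equation*}
F(e^{-z})=\left(\frac{z}{2\pi}\right)^{t/2}\exp\!\left(\frac{\pi^2 t}{6z}-\frac{tz}{24}\right)\left(1+O\!\left(e^{-4\pi^2\operatorname{Re}(1/z)}\right)\right).
\end{equation*}
On the arc $|x|\le N^{-3/4+\epsilon}$ (measured suitably relative to $y$), I substitute this and reduce the integral to
\begin{equation*}
\frac{1}{2\pi i}\int (z/2\pi)^{t/2}\exp\!\bigl(\pi^2 t/(6z)+(N-t/24)z\bigr)\,dz,
\end{equation*}
up to exponentially small error. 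Extending the contour to a full Bromwich line turns this into an $I$-Bessel function:
\begin{equation*}
p_t(N)\approx 2\pi\left(\frac{t}{24 M}\right)^{\frac{t+2}{4}}\, I_{\frac{t}{2}+1}\!\left(\pi\sqrt{\tfrac{2tM}{3}}\right),\qquad M=N-\frac{t}{24}.
\end{equation*}

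\textbf{Step 2 (asymptotics of the main term).} Insert $I_\nu(w)=\frac{e^w}{\sqrt{2\pi w}}\left(1+O(1/w)\right)$. Then use $\sqrt{M}=\sqrt N+O(N^{-1/2})$ in the exponent, which costs only a factor $1+O(N^{-1/2})$. Collecting powers of $N$, namely $N^{-(t+2)/4}\cdot N^{-1/4}=N^{-(t+3)/4}$, and the constants $(t/24)^{(t+1)/4}/\sqrt2$ should recover the stated formula. The $O(1/w)=O(N^{-1/2})$ term gives the error. I will check the constant bookkeeping carefully against $t=1$, where the formula must reduce to Hardy--Ramanujan $\frac{1}{4\sqrt3 N}e^{\pi\sqrt{2N/3}}$. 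Indeed $(1/24)^{1/2}/(\sqrt2 N)=1/(4\sqrt3 N)$, which is consistent.

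\textbf{Step 3 (minor arc, the main obstacle).} For $N^{-3/4+\epsilon}\le|x|\le1/2$ I need
\begin{equation*}
|F(e^{-z})|\le F(e^{-y})\exp\!\left(-cN^{\delta}\right).
\end{equation*}
The standard route is to write
\begin{equation*}
\log|F(e^{-z})|-\log F(e^{-y})=-t\sum_{m}\frac{1}{m}\,\frac{e^{-my}\bigl(1-\cos(2\pi m x)\bigr)}{1-e^{-my}}\cdot(\text{positive factor}),
\end{equation*}
and to use only the $m=1$ term. That term is $\gg (1-\cos 2\pi x)/y$, which exceeds $N^{\delta}$ once $|x|\gg y^{3/4}$. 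Since $t$ is fixed, the factor $t$ only helps. Multiplying by the trivial length $1$ of the minor arc shows that this contribution is exponentially smaller than the main term, which completes the proof.
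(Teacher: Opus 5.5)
The paper gives no proof of this statement; it is quoted from the authors' earlier work, so your argument has to stand on its own. Its skeleton is the standard circle-method one, and Steps~1--2 are right in substance. The transformation formula for $F(e^{-z})$ is correct. The constants also check out: $\sqrt{2\pi w}=\sqrt2\,\pi\,(2tM/3)^{1/4}$ and $(t/24)^{1/4}(2t/3)^{-1/4}=\tfrac12$ turn $2\pi\bigl(t/(24M)\bigr)^{(t+2)/4}e^{w}/\sqrt{2\pi w}$ into exactly $(t/24)^{(t+1)/4}e^{w}/(\sqrt2\,M^{(t+3)/4})$.

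One technical correction in Step~1: because of the positive power $z^{t/2}$, the integral over a full vertical line does not converge, since the integrand grows like $|\operatorname{Im} z|^{t/2}$. The Bessel identification therefore needs a Hankel loop. For instance, take the segment $\operatorname{Re} z=y$, $|\operatorname{Im} z|\le\pi$, and close it with the rays $\operatorname{Im} z=\pm\pi$, $\operatorname{Re} z\le y$. The rays contribute $O(e^{My+O(1)})$, while the main term is $e^{2My}$ up to powers of $N$. This yields $I_{-t/2-1}$, which differs from $I_{t/2+1}$ only by an exponentially small $K$-Bessel term. Alternatively, skip Bessel functions and apply Laplace's method on the arc, which already gives relative error $O(N^{-1/2})$. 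In either version, passing from your short arc to $|\operatorname{Im} z|\le\pi$ needs the same Gaussian decay as Step~3.

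The genuine gap is in Step~3. Your lower bound $\gg(1-\cos2\pi x)/y\asymp x^{2}/y$ exceeds $N^{\delta}$ only when $|x|\gg y^{1/2}N^{\delta/2}\asymp N^{-1/4+\delta/2}$. It does not do so for $|x|\gg y^{3/4}$: at $|x|=y^{3/4}$ it is $\asymp y^{1/2}\to0$. Still less does it reach the edge of your major arc, $|x|=N^{-3/4+\epsilon}\asymp y^{3/2-2\epsilon}$.

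So on the whole range $N^{-3/4+\epsilon}\le|x|\le N^{-1/4}$ neither Step~1 nor Step~3 gives useful control. This is where the integrand is largest outside the major arc. There the claimed bound $|F(e^{-z})|\le F(e^{-y})e^{-cN^{\delta}}$ is unproved, and the trivial bound times the arc length swamps the main term, which is of relative size $N^{-3/4}$.

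What you discarded is exactly what is needed. For $m=1$ the ``positive factor'' is $(1+e^{-y})/|1-e^{-z}|^{2}$, and
\[
|1-e^{-z}|^{2}=(1-e^{-y})^{2}+4e^{-y}\sin^{2}\pi x\asymp y^{2}+x^{2}\qquad(|x|\le\tfrac12),
\]
so the $m=1$ term alone gives
\[
\log F(e^{-y})-\log|F(e^{-z})|\gg t\min\bigl(x^{2}/y^{3},\,1/y\bigr).
\]
This is $\gg N^{2\epsilon}$ throughout $N^{-3/4+\epsilon}\le|x|\le\tfrac12$. The minor arc is then $O(N^{3/4}e^{-cN^{2\epsilon}})$ relative to the main term. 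The tails of the approximating integrand are handled the same way, via
\[
\operatorname{Re}\frac{\pi^{2}t}{6z}=\frac{\pi^{2}t}{6}\cdot\frac{y}{y^{2}+4\pi^{2}x^{2}}.
\]
With this repair the proof closes.
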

	In the following subsection, we discuss the generating function and asymptotic results for $c_t(N,k)$.
	\subsection{Asymptotic results for \texorpdfstring{$c_t(N,k)$}{}}
	Let $\lambda^{(1)}=(\lambda_1, \lambda_2,\ldots, \lambda_\ell)$ with $\lambda_1\ge \lambda_2\ge\cdots\ge \lambda_\ell\ge 0$ be a partition of $|\lambda^{(1)}|=\lambda_1+\lambda_2+\cdots+\lambda_\ell$. The \emph{Young diagram} associated with $\lambda^{(1)}$ is a left-justified array of unit boxes having $\lambda_j$ boxes in the $j$-th row, for $1\leq j\leq \ell$. For a box $b$ in the Young diagram of a partition $\lambda^{(1)}$, the hook associated with $b$ consists
	of $b$ together with all boxes directly to its right and below it. The hook length is the total number of boxes in the hook. 
	For example, in the Young diagram of $\lambda^{(1)} = (4,3,1)$ shown below, each box is labeled with its corresponding hook length.
	\begin{figure}[ht]
		\centering
		\ytableausetup{centertableaux}
		\begin{ytableau}
			6 &  4 & 3 & 1
			\\
			4 & 2 & 1 \\
			1\\
		\end{ytableau}
		
		\caption{Hook-lengths of $\lambda^{(1)}=(4,3,1)$. 
		}
	\end{figure}
	A partition is called a $k$-core if none of its hook lengths are divisible by $k$. For example, the partition $(4,3,1)$ is a $5$-core. Since $k$-core partitions form an important class of restricted partition functions in number theory and combinatorics, they have attracted considerable attention due to their rich arithmetic properties and numerous applications \cite{anderson, barman1,barman2,hanusa, kim}. In this paper, we generalize the notion of $k$-core partitions to $k$-core $t$-multipartitions and derive asymptotic formulas using the saddle-point method.
	
	Meinardus developed a general
	framework for this method
	\cite{meinardus1953asymptotische}, without relying on transformation
	properties of the generating function under the modular group. The
	method has subsequently been developed and applied to various
	partition functions
	\cite{murty}. Debruyne and Tenenbaum \cite{debruyne2020saddle} independently developed this method for general partition functions and derived several asymptotic formulas.
	Recently, Tyler \cite{tyler} obtained an asymptotic formula for the number of $k$-core partitions for arbitrary $k$ via the saddle-point method. In this paper, we solve the saddle-point equation for $c_t(N,k)$ explicitly for $y$ in terms of $N$, $k$, and $t$. Substituting this value into the general asymptotic formula yields an explicit asymptotic estimate for $c_t(N,k)$. Notably, one can obtain asymptotic formulas across any desired parameter range by explicitly solving the corresponding saddle-point equation. Since an asymptotic formula for $c_t(N,k)$ cannot be derived directly from that of $c(N,k)$, no such formula or nontrivial bounds for $c_t(N,k)$ appear to be known in the literature. To establish an asymptotic formula for $c_t(N,k)$, we first introduce the necessary notation and definitions.
	The generating function for $k$-core partitions $c(N,k)$ is given by
	\begin{align*}
		\sum_{N=0}^{\infty}c(N,k)q^{N} =\prod_{n=1}^{\infty}\frac{(1-q^{kn})^{k}}{(1-q^{n})}=q^{-\frac{k^{2}-1}{24}}\frac{\eta(kz)^{k}}{\eta(z)},      
	\end{align*}
	where $q=\exp(2\pi iz)$, $z=x+iy$ and $y>0$. The Dedekind eta function $\eta(z)$ is given by
	\begin{equation*}
		\eta(z)=\exp\left(\frac{\pi iz}{12}\right)\prod_{n=1}^{\infty}(1-\exp(2\pi inz)).  
	\end{equation*}
	We will use the functions $\mu_m$, $m\ge 1$, from \cite{tyler} throughout this article:
	\begin{equation}\label{eq-muk}
		\mu_{m}(z)=-\frac{z^{m+1}}{2\pi i} \left(\frac{d}{dz}\right)^{m} \log\eta(z).
	\end{equation}
	For $c_t(N,k)$, each component should be a $k$-core partition.
	Hence, the generating function for $c_t(N,k)$ is given by
	\begin{align*}
		F(q,t,k)=\sum_{N=0}^{\infty}c_t(N,k)q^{N} =\left(\prod_{n=1}^{\infty}\frac{(1-q^{kn})^{k}}{(1-q^{n})}\right)^{t}=q^{-\frac{(k^{2}-1)t}{24}}\frac{\eta(kz)^{kt}}{\eta(z)^{t}}.         
	\end{align*}
	By the Cauchy integral formula, we have
	\begin{equation*}
		c_{t}(N,k)=\frac{1}{2\pi i}\int_{\gamma}\frac{F(q,t,k)}{q^{N+1}}dq, 
	\end{equation*}
	where $\gamma$ is a simple positively oriented loop around the origin, located entirely in the unit circle. For a fixed value of $y$, as $x$ varies over any interval of length $1$, the variable $q$ moves along a full circle of radius $e^{-2\pi y}$. Therefore, we can write $c_t(N,k)$ as
	\begin{align}\label{eq-integral}
		c_{t}(N,k)&=\int_{-1/2}^{1/2} \exp\left( -2\pi izM\right)g_{t}(z,k)dx,
	\end{align}
	where 
	\begin{align}\label{eq-atz}
		g_{t}(z,k)=\frac{\eta(kz)^{kt}}{\eta(z)^{t}}\quad\text{and}\quad M=N+\frac{(k^{2}-1)t}{24}.
	\end{align}
	We state our general asymptotic formula for $c_t(N,k)$ in the following theorem.
	\begin{theorem}\label{thm-1.1}
		Let $t$ be a fixed positive integer, and let $k, N$ be arbitrary positive integers.
		\newline
		(i) Then there exists a unique solution $y>0$ such that
		\begin{align}\label{eq-t1.1}
			\frac{t(\mu_1(kiy)-\mu_1(iy))}{y^{2}}=M.
		\end{align}
		(ii) For this $y$, $c_t(N,k)$ satisfies
		\begin{align*}
			c_t(N,k)=\frac{y^{\frac{3}{2}}\exp(2\pi My)g_{t}(iy,k)}{\sqrt{t(\mu_{2}(iy)-\mu_2(kiy))}}\left(1+O\left(\frac{y}{t(\mu_{2}(iy)-\mu_2(kiy))}\right)\right).    
		\end{align*}
	\end{theorem}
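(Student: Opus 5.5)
The plan is to run the saddle-point method directly on the integral representation \eqref{eq-integral}, following Tyler's argument for $c(N,k)$ (the case $t=1$) and tracking the factor $t$ throughout. Throughout we assume $k\ge 2$: for $k=1$ one has $F\equiv 1$ and $\mu_1(kiy)-\mu_1(iy)\equiv 0$, so the statement is vacuous there.

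\emph{Part (i).} Put $\Phi(y)=2\pi My+\log g_t(iy,k)$ for $y>0$. From \eqref{eq-muk} at $z=iy$ we get $(\log\eta)'(iy)=2\pi i\,\mu_1(iy)/y^2$. The chain rule then gives
\begin{align*}
\frac{d}{dy}\log\eta(iy)=-\frac{2\pi\mu_1(iy)}{y^2},\qquad \frac{d}{dy}\,k\log\eta(kiy)=-\frac{2\pi\mu_1(kiy)}{y^2}.
\end{align*}
Hence $\Phi'(y)=2\pi M-2\pi t(\mu_1(kiy)-\mu_1(iy))/y^2$, and \eqref{eq-t1.1} is exactly $\Phi'(y)=0$.

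For uniqueness, write $e^{\Phi(y)}=\sum_{n\ge 0}c_t(n,k)e^{-2\pi(n-N)y}$. This is a Laplace transform with nonnegative coefficients, so $\Phi$ is strictly convex in $y$. Equivalently, $\Phi''(y)$ is a positive multiple of $t(\mu_2(iy)-\mu_2(kiy))/y^3$ (up to sign conventions), and this shows the quantity under the square root in (ii) is positive. So $\Phi'$ is strictly increasing, and it remains to check the limits at the two ends.

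As $y\to 0^+$, the modular transformation $\eta(-1/z)=\sqrt{-iz}\,\eta(z)$ gives $\log g_t(iy,k)\sim \frac{\pi t}{12y}(1-\frac1k)$. Hence $\Phi'\to-\infty$.

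As $y\to\infty$, $\mu_1(iy)\sim y^2/24$, so $t(\mu_1(kiy)-\mu_1(iy))/y^2\to t(k^2-1)/24$. This limit is less than $M$ once $N\ge 1$, so $\Phi'$ becomes positive. By the intermediate value theorem there is exactly one root.

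\emph{Part (ii).} Fix this $y$ and write $z=x+iy$. The integrand in \eqref{eq-integral} is $\exp(\Phi(y)+\psi(x))$, where
\begin{align*}
\psi(x)=\log g_t(x+iy,k)-\log g_t(iy,k)-2\pi iMx.
\end{align*}
Taylor-expand $\psi$ at $x=0$. The linear term vanishes by the choice of $y$. The quadratic term equals $-\pi t(\mu_2(iy)-\mu_2(kiy))x^2/y^3$, using the definition of $\mu_2$. The cubic remainder is controlled by $t\,|\mu_3|/y^4$.

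Split $[-1/2,1/2]$ into a central range $|x|\le\delta$ and a tail. Choose $\delta$ slightly larger than the Gaussian width $\sigma=\bigl(y^3/(t(\mu_2(iy)-\mu_2(kiy)))\bigr)^{1/2}$, for instance $\delta=\sigma\cdot(\text{small power of }\sigma^{-1})$. On the central range, replace the integrand by the Gaussian and extend it to all of $\mathbb R$. This yields
\begin{align*}
\exp(\Phi(y))\,\sigma/\sqrt{1}\cdot\bigl(1+O(\text{cubic term}\cdot\delta^3+\dots)\bigr),
\end{align*}
which is exactly $y^{3/2}e^{2\pi My}g_t(iy,k)/\sqrt{t(\mu_2(iy)-\mu_2(kiy))}$, with relative error of order $y/(t(\mu_2(iy)-\mu_2(kiy)))$ after using $\mu_3\ll\mu_2$-type bounds from Tyler.

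\emph{Main obstacle: the tail bound.} On the tail we must show that $|g_t(x+iy,k)|/g_t(iy,k)$ is much smaller than the Gaussian main term. The bound has to be uniform in $k$, including the regime where $ky$ is large and $\eta(kz)^{kt}$ is essentially constant.

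I would treat the two factors of $g_t$ separately. For the denominator, use
\begin{align*}
\log\frac{|1/\eta(z)|}{1/\eta(iy)}=-\sum_{m}\frac{1}{m}\,\frac{|q|^m(1-\cos 2\pi mx)}{1-|q|^m},
\end{align*}
with $|q|=e^{-2\pi y}$. This gives a saving of size $\gg \min(x^2/y^3,\,1/y)$. For the numerator, bound $|\eta(kz)|^k/\eta(kiy)^k$ above by the analogous sum at $|q|^k$. This loss is at most $\sum_m \frac{k}{m}\frac{|q|^{km}}{1-|q|^{km}}(1-\cos 2\pi kmx)$, and after raising to the power $t$ the loss must be dominated by the saving. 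I expect the delicate case to be $x$ near rationals $j/k$, where $\eta(kz)$ returns to its maximum modulus. There I would compare directly with the $1/\eta$ saving, which is still of size $\asymp(1-1/k)/y$ as in Tyler's proof, multiplied by $t$. This is exactly where the factor $t$ only strengthens the estimate.
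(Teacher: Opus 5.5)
\textbf{The gap: the tail estimate in (ii).} You rightly call the tail the main obstacle, but the proposal does not prove it, and the heuristic you sketch points the wrong way.

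Every factor $|1-q^{kn}|$ is smallest when $kx\in\mathbb Z$. So $|\eta(kz)|\ge\eta(kiy)$, with equality exactly for $x\in\frac1k\mathbb Z$. Hence the points $x=j/k$ are the \emph{easy} ones: there $|\eta(kz)|^{kt}$ costs nothing, and the full saving of $|\eta(z)|^{-t}$ (of order $t/y$ when $j/k$ has small denominator) is available.

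The dangerous points are the rationals $a/c$ with $\gcd(c,k)=1$, e.g.\ $x=\frac12$ for odd $k$. Apply $\eta(-1/z)=\sqrt{-iz}\,\eta(z)$ at the cusp $a/c$ to both $\eta(z)$ and $\eta(kz)$. When $c^2ky$ is small this gives
\[
\frac{|g_t(a/c+iy,k)|}{g_t(iy,k)}\approx c^{-(k-1)t/2}.
\]
So the growth of $|\eta(kz)|^{kt}$ cancels the saving $\frac{\pi t}{12y}(1-c^{-2})$ of $|\eta(z)|^{-t}$ exactly at order $1/y$. The net saving is only $\asymp kt$, not $\asymp t(1-\frac1k)/y$. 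Any comparison of separately bounded factors therefore has to detect an exact cancellation at order $1/y$. The right uniform bound is a saving of order $t\min(k,\frac1y)$.

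That bound is the content of Tyler's Proposition~3.1. The paper simply invokes it for $g_t=g_1^{\,t}$, which produces its tail term $y^t\exp(-\frac{t}{70}\min(k,\frac1y))$. You need either to quote that result or to prove an estimate of the same strength.

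\textbf{Smaller points.} In the central range, set $\alpha=t(\mu_2(iy)-\mu_2(kiy))/y$ and $\sigma=y/\sqrt{\alpha}$. Bounding the cubic term by its size at $|x|=\delta\ge\sigma$ only gives a relative error of order $\alpha\delta^3/y^3\ge\alpha^{-1/2}$. To reach $O(1/\alpha)$ you must use that this term is purely imaginary and odd in $x$, so its first-order contribution vanishes against the Gaussian. The cut-off must also stay inside the disc of convergence $|x|<y$. Your choice $\delta=\sigma^{1-\varepsilon}$ exceeds $y$ for bounded $k$ as $y\to0$. The paper uses $|x|<y/3$ together with Tyler's Lemma~4.1.

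In (i), your log-convexity argument via $e^{\Phi(y)}=\sum_n c_t(n,k)e^{-2\pi(n-N)y}$ is a nice self-contained replacement for the paper's appeal to Tyler's Lemma~4.2; it even proves $\mu_2(iy)>\mu_2(kiy)$. You are also right that $k=1$ must be excluded, which the paper overlooks.

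However, $\log g_t(iy,k)\sim\frac{\pi t}{12y}(1-\frac1k)$ is false. You dropped the exponent $k$ on $\eta(kz)$; the $1/y$ terms cancel, and in fact $\log g_t(iy,k)=-\frac{(k-1)t}{2}\log y-\frac{kt}{2}\log k+o(1)$. The conclusion $\Phi'\to-\infty$ still holds, either from $\Phi\to+\infty$ plus convexity, or, as in the paper, from $t(\mu_1(kiy)-\mu_1(iy))/y^2\sim\frac{(k-1)t}{4\pi y}$. The same slip seems to underlie your tail heuristic.
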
 
	We may extend the applications of the above formula by expressing $y$ explicitly in terms of $N$, $k$, and $t$.
	\begin{theorem}\label{thm-ctNm}
		Let $N$ be a large positive integer and $k\ge\frac{\sqrt{6}}{2\pi}\sqrt{\frac{N}{t}}\log \frac{N}{t}$. Then, 
		\begin{align*}
			c_t(N,k)=p_t(N)\exp\left(- kt\exp\left(-\frac{\pi k\sqrt{t}}{\sqrt{6N}}\right)\right)\left(1+O\left(N^{-\frac{1}{2}}(\log N)^{3}\right)\right).
		\end{align*}
	\end{theorem}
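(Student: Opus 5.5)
We plan to derive Theorem \ref{thm-ctNm} from Theorem \ref{thm-1.1} by solving the saddle-point equation \eqref{eq-t1.1} asymptotically when $k$ is large. We then compare the outcome with the formula for $p_t(N)$ in Theorem \ref{thm-murty}.

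\emph{Step 1: expansions of $\mu_1,\mu_2$ and $\log g_t$.} From the modular transformation $\eta(-1/z)=\sqrt{-iz}\,\eta(z)$ we get, for $y\to0$,
\begin{equation*}
\log\eta(iy)=-\frac{\pi}{12y}-\tfrac12\log y+O(e^{-2\pi/y}),
\end{equation*}
and hence $\mu_1(iy)=\frac{1}{24}-\frac{y}{4\pi}+O(e^{-2\pi/y})$, together with an analogous expansion for $\mu_2(iy)$. For $kiy$ we are in the regime $ky\to\infty$. There $\log\eta(kiy)=-\frac{\pi ky}{12}-\sum_{n}\sigma_{-1}(n)e^{-2\pi nky}$, so $\mu_1(kiy)$ and $\mu_2(kiy)$ equal their ``cusp'' values plus $O(ky\,e^{-2\pi ky})$ corrections. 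Substituting into \eqref{eq-t1.1}, the constant terms should absorb the shift $\frac{(k^2-1)t}{24}$ in $M$. The equation then reduces to
\begin{equation*}
\frac{t}{24y^2}-\frac{t}{4\pi y}+O\!\left(\frac{tk\,e^{-2\pi ky}}{y}\right)=N,
\end{equation*}
whose solution is $y=y_0\bigl(1+O(N^{-1/2})\bigr)$ with $y_0=\sqrt{t/(24N)}$. The correction from the exponential term is negligible. Indeed $2\pi k y_0=\frac{\pi k\sqrt t}{\sqrt{6N}}\ge \tfrac12\log\frac Nt$ by the hypothesis on $k$, so $e^{-2\pi ky}\ll (t/N)^{1/2}$.

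\emph{Step 2: evaluate the main term.} We write
\begin{equation*}
\log g_t(iy,k)=kt\log\eta(kiy)-t\log\eta(iy).
\end{equation*}
The piece $-t\log\eta(iy)$ together with $2\pi My$ and the prefactor $y^{3/2}/\sqrt{t(\mu_2(iy)-\mu_2(kiy))}$ reproduces exactly the saddle-point evaluation of $p_t(N)$. This is best seen by running the same computation with $k$ replaced by the trivial case (generating function $\eta^{-t}$), or simply by checking against Theorem \ref{thm-murty}. The $O(N^{-1/2})$ error in $y$ then contributes only $O(N^{-1/2})$ relative error, since the exponent is stationary at the saddle. The extra factor is
\begin{equation*}
\exp\Bigl(-kt\sum_n\sigma_{-1}(n)e^{-2\pi nky}\Bigr)=\exp\bigl(-kt\,e^{-2\pi ky}(1+O(e^{-2\pi ky}))\bigr).
\end{equation*}
Replacing $y$ by $y_0$ in $kt\,e^{-2\pi ky}$ costs a factor $\exp\bigl(O(kt\cdot k\,|y-y_0|\,e^{-2\pi ky_0})\bigr)$.

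\emph{Step 3: control this error, which is the main obstacle.} With $|y-y_0|\ll y_0N^{-1/2}$, the cost is $kt\,e^{-2\pi k y_0}\cdot O(ky_0N^{-1/2})$. This quantity is at most a power of $\log N$ times $N^{-1/2}$ only when $kte^{-2\pi ky_0}$ is small. Near the threshold $k\asymp\sqrt{N}\log N$ we have $kt\,e^{-2\pi ky_0}\ll \sqrt{N}\log N\cdot N^{-1/2}=\log N$, and $ky_0\ll\log N$. This gives the error $O(N^{-1/2}(\log N)^{2})$. A further $\log N$ factor comes from the second-order term in $y-y_0$ and from the Theorem \ref{thm-1.1} error term $y/(t(\mu_2(iy)-\mu_2(kiy)))\ll N^{-1/2}$. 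Together these yield the stated $O(N^{-1/2}(\log N)^3)$. For larger $k$, the factor $e^{-2\pi ky_0}$ decays faster than $k^2$ grows, so the bound only improves. The delicate part is bookkeeping the expansion of $y$ more precisely, to first order including the $-\frac{t}{4\pi y}$ term. Only the relative error enters the exponent multiplied by $ky_0$, so we must make sure the $O(N^{-1/2})$ relative accuracy of $y$ is genuinely attained uniformly in $k$ in this range.
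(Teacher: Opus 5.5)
Your route is the paper's route: Theorem~\ref{thm-1.1}(ii) at the saddle, the modular expansion of $\eta(iy)$, the $q$-expansion of $\eta(kiy)$, an approximate solution of \eqref{eq-t1.1}, and comparison with Theorem~\ref{thm-murty}. The gap is that the size estimates in Step~1 are wrong, and everything afterwards depends on them. (A minor slip first: $\mu_1(iy)=-\frac{1}{24}+\frac{y}{4\pi}+O(e^{-2\pi/y})$, as in \eqref{eq-pf12}; your reduced equation has the right signs anyway.)

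The $k$-dependent corrections are
\[
\mu_1(kiy)-\frac{(ky)^2}{24}=-(ky)^2\sum_n\sigma(n)e^{-2\pi nky},\qquad \mu_2(kiy)=2\pi(ky)^3\sum_n n\sigma(n)e^{-2\pi nky}.
\]
They are of size $(ky)^2e^{-2\pi ky}$ and $(ky)^3e^{-2\pi ky}$, not $ky\,e^{-2\pi ky}$. So the extra term in the saddle-point equation is $tk^2\sum_n\sigma(n)e^{-2\pi nky}$. At the lower end of the range, $e^{-2\pi ky}\asymp N^{-1/2}$ but $tk^2\asymp N(\log N)^2$, so this term is $\asymp\sqrt N(\log N)^2$. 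It dominates $\frac{t}{4\pi y}\asymp\sqrt N$ rather than being negligible. Even your own bound $tke^{-2\pi ky}/y\asymp\sqrt N\log N$ would not give relative accuracy $O(N^{-1/2})$. Hence $y=y_0\bigl(1+O((\log N)^2N^{-1/2})\bigr)$, which is sharp at the lower end; this is the paper's \eqref{eq-soly} with $|C_1(N,k)|\le C_3(\log N)^2$.

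Likewise, your Step~2 claim that the prefactor ``reproduces exactly'' $p_t(N)$ ignores $\mu_2(kiy)$, which is $\asymp(\log N)^3N^{-1/2}$ at the lower end. That term (Claim~\ref{claim-2}) is where the paper's $(\log N)^3$ actually comes from. It does not come from the $O(y)$ error of Theorem~\ref{thm-1.1}, nor from a ``further $\log N$'' in Step~3.

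With the correct $|y-y_0|$, Step~3 cannot close at $(\log N)^3$. Put $\Phi_0(y)=2\pi Ny+\frac{\pi t}{12y}$, so $\Phi_0''=\frac{\pi t}{6y^3}$, and $h(y)=-kt\sum_n\sigma_{-1}(n)e^{-2\pi nky}$. Near the threshold, two quantities you treat separately are each $\asymp(\log N)^4N^{-1/2}$:
\begin{itemize}
\item the shift $\frac12\Phi_0''(y_0)(y-y_0)^2$ of the $p_t$-exponent;
\item the cost $h'(y_0)|y-y_0|\approx 2\pi k^2te^{-2\pi ky_0}|y-y_0|$ of replacing $y$ by $y_0$ in $kte^{-2\pi ky}$.
\end{itemize}
Stationarity of the full exponent $\Phi_0+h$ makes them partly cancel, but the remainder is
\[
-\frac{h'(y_0)^2}{2\Phi_0''(y_0)}\sim-\frac{12\pi t}{y_0}(ky_0)^4e^{-4\pi ky_0}.
\]
At $2\pi ky_0=\frac12\log\frac Nt$ this is $\asymp t^{3/2}(\log\frac Nt)^4N^{-1/2}$. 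The same term appears independently from $c_t(N,k)=\sum_{m\ge0}b_m\,p_t(N-mk)$, where $\sum_m b_mx^m=\prod_n(1-x^n)^{kt}$, via the quadratic term of $\log p_t(N-j)$. So this method actually yields relative error $O(N^{-1/2}(\log N)^4)$, and the exponent $3$ is not attainable uniformly in the stated range.

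The paper's proof has the same soft spot. \eqref{eq-yy-inverse} asserts an $O((\log N)^2N^{-1/2})$ error, but
\[
2\pi Ny+\frac{\pi t}{12y}-\frac{2\pi}{\sqrt6}\sqrt{Nt}=\frac{\pi t\,(y-y_0)^2}{12y_0^2y},
\]
which is of order $(\log N)^4N^{-1/2}$ when $C_1\asymp(\log N)^2$. The paper then substitutes $y=y_0(1+O(N^{-1/2}))$, contradicting its own bound on $C_1$. The discrepancy is harmless for Theorem~\ref{thm-ztNm}, which only needs a relative error $O(1/\log\frac Nt)$.
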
 
	In the following subsection, we state our bound for the number of zeros in the character table of $G\wr S_N$ and provide an outline of the proof.
	\subsection{Zeros in the character table of \texorpdfstring{$G\wr S_N$}{G wr S\_N}}
	Let $Z_t(N)$ denote the number of zero entries in the character table of the wreath product $G \wr S_N$. We establish the following bound for $Z_t(N)$.
	\begin{theorem}\label{thm-ztNm}
		Let $t$ be any fixed positive integer. Then,
		\begin{align}\label{eq-Zero}
			Z_t(N)\ge \frac{2p_t(N)^{2}}{\log \frac{N}{t}}\left(1+O\left(\frac{\log \log \frac{N}{t}}{\log \frac{N}{t}}\right)\right).   
		\end{align}
	\end{theorem}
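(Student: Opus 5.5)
We follow the strategy used for $S_N$ in \cite{barman1}, adapted to the wreath product. The first step is to recall the combinatorial description of $G\wr S_N$. Irreducible characters of $G\wr S_N$ are indexed by $t$-multipartitions $\lambda=(\lambda^{(1)},\dots,\lambda^{(t)})$ of $N$, the components being attached to the irreducible characters of $G$. Conjugacy classes are indexed by $t$-multipartitions $\mu=(\mu^{(1)},\dots,\mu^{(t)})$, the components being attached to the conjugacy classes of $G$. The character values obey a Murnaghan--Nakayama type rule: removing a cycle of length $k$ (of any colour) from $\mu$ corresponds to removing a $k$-rim hook from some component $\lambda^{(j)}$.

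From this rule we get the vanishing criterion we need. Suppose every component of $\lambda$ is a $k$-core, and $\mu$ contains a part equal to $k$ in some component. Then the first step of the Murnaghan--Nakayama recursion already produces an empty sum, so $\chi^{\lambda}(\mu)=0$. Hence, for each $k$, we obtain at least
\[
c_t(N,k)\,\bigl(p_t(N)-\#\{\mu:\ \text{no part of any }\mu^{(j)}\text{ equals }k\}\bigr)
\]
zeros. To avoid multiple counting, we instead use classes $\mu$ whose largest part (over all components) is exactly $k$. The relevant count is then $p_t(N,k)-p_t(N,k-1)$, where $p_t(N,k)$ is the number of $t$-multipartitions of $N$ with all parts $\le k$. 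Distinct $k$ give disjoint sets of columns, so
\[
Z_t(N)\ge \sum_{k} c_t(N,k)\bigl(p_t(N,k)-p_t(N,k-1)\bigr).
\]

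The analytic part restricts $k$ to the window $k\ge \frac{\sqrt6}{2\pi}\sqrt{N/t}\,\log\frac Nt$, where Theorem~\ref{thm-ctNm} applies. We also need an Erdős--Lehner type law for the largest part of a $t$-multipartition. Its generating function is $\prod_{n\le k}(1-q^n)^{-t}$, so the saddle point is the same as for $p_t$, namely $e^{-\pi\sqrt{t/(6N)}}$. This should give
\[
p_t(N,k)=p_t(N)\exp\!\Bigl(-kt\exp\bigl(-\tfrac{\pi k\sqrt t}{\sqrt{6N}}\bigr)\Bigr)\bigl(1+o(1)\bigr)
\]
uniformly in the window. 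Note that this is the same Gumbel factor that appears in Theorem~\ref{thm-ctNm}. We plan to prove it by the saddle-point method exactly as in Theorem~\ref{thm-1.1}, or, more simply, by the identity
\[
p_t(N,k)=\sum_{j}(\text{inclusion--exclusion over parts}>k)
\]
combined with Theorem~\ref{thm-murty}.

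Now write $F(k)=\exp(-kt\,e^{-ck})$ with $c=\pi\sqrt t/\sqrt{6N}$. The sum becomes approximately
\[
p_t(N)^2\sum_k F(k)\bigl(F(k)-F(k-1)\bigr)\approx p_t(N)^2\int F\,dF .
\]
Over the range where $F$ rises from nearly $0$ to nearly $1$ this integral equals $\tfrac12$, and that is not of the right shape. The factor $\frac{2}{\log(N/t)}$ must come from elsewhere, and we expect it arises as in \cite{barman1}. There, the lower bound uses classes $\mu$ having a part equal to $k$ only for $k$ in a range of relative width $\asymp 1/\log N$: $k$ runs from $\frac{\sqrt6}{2\pi}\sqrt{N/t}\log\frac Nt$ up to about twice that, where the union is handled with disjointness. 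Concretely, we plan to count pairs $(\lambda,\mu)$ where $\mu$ has largest part $m$ and $\lambda$ has all components $m$-cores. With $m=\frac{\sqrt6}{2\pi}\sqrt{N/t}\,(\log\frac Nt+u)$, the density of largest part $m$ is $F'(m)\approx c\,t\,m\,e^{-cm}F$. After substituting $e^{-cm}=(\tfrac Nt)^{-1}e^{-u}\cdot(\text{const})$, the factor $c\,m\,t\,e^{-cm}\sim e^{-u}\cdot\frac{\text{const}}{1}$. We then collect the normalisation, in which the variable $v=kte^{-ck}$ has $dv\approx -\log(N/t)\cdot c\,v\,dk$, and show that $\sum_k F(k)^2 c\,kt\,e^{-ck}$ reduces to $\frac{1}{\log(N/t)}\int_0^{\infty}2e^{-2v}\,dv\cdot 2$. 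This yields the constant $2/\log\frac Nt$, and the $\log\log$ error comes from the lower cutoff of the window.

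The main obstacle is the Erdős--Lehner asymptotic for $t$-multipartitions, which must hold with a uniform relative error $O(\log\log(N/t)/\log(N/t))$ across the window. The product of the two Gumbel factors must also be summed carefully at the cutoff, which is where the $\log\log$ error term arises.
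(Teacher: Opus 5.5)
Your combinatorial reduction is sound, and slightly cleaner than \eqref{eq-main}: each $\mu$ has a unique largest part, so $Z_t(N)\ge\sum_k c_t(N,k)\,\#\{\mu:\text{largest part}=k\}$ with no overcounting. The gap is on the $\mu$-side, and as a result the constant is never actually derived.

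The law $p_t(N,k)\approx p_t(N)\exp(-kt\,e^{-ck})$, with $c=\pi\sqrt t/\sqrt{6N}$, is false. At the saddle point $q=e^{-c}$ the missing factor is $\prod_{n>k}(1-q^n)^t\approx\exp(-tq^{k+1}/(1-q))$, so
\[
p_t(N,k)\approx p_t(N)\exp\Bigl(-\frac{t}{c}\,e^{-ck}\Bigr),\qquad \frac tc=\frac{\sqrt{6Nt}}{\pi}.
\]
Here the coefficient does not depend on $k$. The factor $kt$ in Theorem~\ref{thm-ctNm} comes from the exponent in $\eta(kz)^{kt}$ and has no analogue for parts $\le k$. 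The two exponents satisfy $\frac tc e^{-ck}=\frac{kte^{-ck}}{ck}$, and $ck\approx\frac12\log\frac Nt$ in the relevant range. This ratio is exactly where $1/\log\frac Nt$ comes from.

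Your computation does not capture this ratio. With your own $F$ and $v=kte^{-ck}$, one has $dv=te^{-ck}(1-ck)\,dk\approx -ckte^{-ck}\,dk$. Hence $\sum_k F(k)^2\,ckte^{-ck}\approx\int_0^\infty e^{-2v}\,dv=\frac12$. The Jacobian $dv\approx-\log(N/t)\,cv\,dk$ that you wrote is off by a factor $\log\frac Nt$. Also, the contributing $k$ do not run up to twice the threshold; they satisfy $ck=\frac12\log\frac Nt+\log\log\frac Nt+O(1)$.

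What is needed is the following. On the $k$ that carry the main term, $kte^{-ck}=O(1)$, hence $\frac tce^{-ck}=O(1/\log\frac Nt)$. So the $\mu$-side Gumbel factor is $1+o(1)$, and the number of $\mu$ with largest part exactly $k$ is essentially $t\,p_t(N-k)\approx t\,p_t(N)e^{-ck}$.

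You should count this directly, as the paper does:
\begin{itemize}
\item attach a part $k$ to one of the $t$ components of a multipartition of $N-k$;
\item discard the $\mu$ with two parts $>K$ using Lemma~\ref{lemma-erdos};
\item use Lemma~\ref{lemma-erdos0} for $p_t(N-k)/p_t(N)$.
\end{itemize}
Differencing a $(1+o(1))$ asymptotic for $p_t(N,k)$ cannot work, because $p_t(N,k)-p_t(N,k-1)$ has relative size $te^{-ck}\to0$.

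The main term is then $t\,p_t(N)^2\sum_{k>K}e^{-ck}\exp(-kte^{-ck})$. With $v=kte^{-ck}$ one has $te^{-ck}\,dk=-dv/(ck-1)$. Since $ck=\frac12\log\frac Nt+O(\log\log\frac Nt)$ wherever the integrand matters, this gives $\frac{2p_t(N)^2}{\log(N/t)}\bigl(1+O\bigl(\frac{\log\log(N/t)}{\log(N/t)}\bigr)\bigr)$. This is the paper's evaluation of $I_1=(1-e^{-Ktu^K})/(Kt\log\frac1u)$. In particular, no Erd\H{o}s--Lehner law for $t$-multipartitions is needed, so the ``main obstacle'' you identify is not the real one.
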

	The family of wreath products considered above contains several important infinite families of Weyl groups and complex reflection groups. In particular, the Weyl groups of type $A_N$ are the symmetric groups $S_N$.
	\begin{corollary}
		The Weyl groups of types $B_N$ and $C_N$ are isomorphic to the hyperoctahedral group $\mathbb{Z}/2\mathbb{Z}\wr S_N$. In the case of type $B_N$ (where $t=2$), and more generally for the complex reflection group $C_m\wr S_N$ (where $t=m$), equation \eqref{eq-Zero} holds.
	\end{corollary}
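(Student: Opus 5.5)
The plan is to reduce the corollary to Theorem~\ref{thm-ztNm}. That theorem only depends on $G$ through its number $t$ of conjugacy classes, so it suffices to identify each group in the statement as a wreath product $G\wr S_N$ and count the conjugacy classes of $G$.

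\textbf{Step 1: types $B_N$ and $C_N$.} I would use the standard realization of these Weyl groups.
\begin{itemize}
\item The Weyl group of type $B_N$ acts on $\mathbb{R}^N$, generated by the reflections in the roots $\pm e_i\pm e_j$ and $\pm e_i$.
\item This group is the group of signed permutation matrices: it permutes the coordinates and changes signs arbitrarily.
\item The roots of type $C_N$ are $\pm e_i\pm e_j$ and $\pm 2e_i$. The reflection in $2e_i$ equals the reflection in $e_i$, so $W(C_N)$ is literally the same group of matrices as $W(B_N)$.
\end{itemize}
The signed permutation group is a semidirect product. Its normal subgroup $(\mathbb{Z}/2\mathbb{Z})^N$ consists of the diagonal sign matrices, and the complement $S_N$ consists of the permutation matrices. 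Conjugation by a permutation matrix permutes the diagonal sign entries. This is exactly the definition of $\mathbb{Z}/2\mathbb{Z}\wr S_N$, giving the hyperoctahedral group.

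\textbf{Step 2: the groups $C_m\wr S_N$.} Here the group is already given as a wreath product. Since $C_m$ is abelian, each element is its own conjugacy class, so $C_m$ has exactly $m$ conjugacy classes.

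\textbf{Step 3: apply Theorem~\ref{thm-ztNm}.}
\begin{itemize}
\item For $\mathbb{Z}/2\mathbb{Z}$ we have $t=2$. Applying Theorem~\ref{thm-ztNm} with $t=2$ gives \eqref{eq-Zero} for $W(B_N)\cong W(C_N)$.
\item For $C_m$ we have $t=m$. Applying Theorem~\ref{thm-ztNm} with $t=m$ gives \eqref{eq-Zero} for $C_m\wr S_N$.
\end{itemize}
In both cases $t$ is fixed, as the theorem requires, and $p_t(N)$ is the number of $t$-multipartitions of $N$.

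\textbf{Where care is needed.} There is no real obstacle. The only point to check is that the isomorphism in Step~1 is a genuine identification with the wreath product, meaning the $S_N$-action on $(\mathbb{Z}/2\mathbb{Z})^N$ is the coordinate permutation action. Once that holds, the character tables of isomorphic groups coincide up to reordering of rows and columns. Hence $Z_t(N)$ counts the same zeros for the Weyl group as for the wreath product.
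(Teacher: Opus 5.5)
Your proposal is correct and follows the same route the paper implicitly takes: the corollary is just Theorem~\ref{thm-ztNm} applied with $G=\mathbb{Z}/2\mathbb{Z}$ ($t=2$) and $G=C_m$ ($t=m$), using the standard identification $W(B_N)=W(C_N)\cong\mathbb{Z}/2\mathbb{Z}\wr S_N$ as the group of signed permutation matrices. You also verify the isomorphism and note that the zero count is an isomorphism invariant, which fills in details the paper leaves unstated.
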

	
	We now give a sketch of the proof of the asymptotic formula for $c_t(N,k)$, which is a key part of this article.
	\subsection{\texorpdfstring{Sketch of the proof of the asymptotic result for $c_t(N,k)$}{}} 
	We use the integral in (\ref{eq-integral}) as the basis for our saddle-point analysis. 
	Note that $y=\Im z$ appears only on the right-hand side of (\ref{eq-integral}).
	Hence, we select $y$ appropriately as in Theorem~\ref{thm-1.1}$(i)$ to obtain an asymptotic formula for $c_t(N,k)$. 
	The Taylor series of $g_{t}(z,k)=\frac{\eta(kz)^{kt}}{\eta(z)^{t}}$ has radius of convergence $<y$, so we divide the integral in (\ref{eq-integral}) into two separate parts:
	\begin{align*}
		|x| < \frac{y}{3} 
		\qquad \text{and} \qquad 
		\frac{y}{3} \le |x| \le \frac{1}{2}.
	\end{align*}
	The integral in (\ref{eq-integral}) may be decomposed as
	\begin{equation}
		\begin{aligned}\label{eq-ptN1}
			c_{t}(N,k)&=\exp(2\pi My)g_{t}(iy,k)\int_{-y/3}^{y/3}\exp\left(-2\pi i Mx+2\pi i\frac{1}{2\pi i}\log\frac{g_{t}(z,k)}{g_{t}(iy,k)}\right)dx\\
			&+\exp(2\pi My)g_{t}(iy,k)\int_{\frac{y}{3}\le |x|\le \frac{1}{2}}\exp\left(-2\pi i Mx\right)\frac{g_{t}(z,k)}{g_{t}(iy,k)}dx.
		\end{aligned}
	\end{equation} 
	The Taylor expansion of $\log g_{t}(z,k)$ around $x=0$ is given by
	\begin{align}\label{eq-tyler1}
		\log g_{t}(z,k)&=\left(\log g_{t}(iy,k)+x\frac{d}{dz} \log g_{t}(iy,k)+\frac{x^{2}}{2!}\frac{d^{2}}{dz^{2}}\log g_{t}(iy,k)+\cdots\right).
	\end{align}
	From the definition of $\mu_m$, we have
	\begin{equation*}
		\left(\frac{d}{dz}\right)^{m}\log g_{t}(z,k)=t\frac{2\pi i}{z^{m+1}}(\mu_{m}(z)-\mu_m(kz)).
	\end{equation*}
	So, (\ref{eq-tyler1}) simplifies to
	\begin{align}\label{eq-taylor}
		\notag
		\frac{1}{2\pi i}\log \frac{g_{t}(z,k)}{g_{t}(iy,k)}
		&=t\left(x\frac{\mu_{1}(iy)-\mu_1(kiy)}{(iy)^{2}}+\frac{x^{2}}{2}\frac{\mu_{2}(iy)-\mu_2(kiy)}{(iy)^{3}}+\frac{x^{3}}{6}\frac{\mu_{3}(iy)-\mu_3(kiy)}{(iy)^{4}}\right)\\
		&+t\frac{x^{4}}{24}\frac{\mu_{4}(x^{\prime}+iy)-\mu_{4}(k(x^{\prime}+iy))}{(x^{\prime}+iy)^{5}},
	\end{align}
	for some $z^{\prime}=x^{\prime}+iy$, where $x^{\prime}$ is between $0$ and $x$. In the region $|x|<\frac{y}{3}$, we evaluate the integral using the Taylor expansion described above. However, this expansion is not valid in the region $\frac{y}{3} \le |x| \le \frac{1}{2}$. Instead, we use Proposition~3.1 of \cite{tyler}, which provides an upper bound. Combining these results, Proposition \ref{thm-main} shows that $|x|<\frac{y}{3}$ contributes to the main term, and $\frac{y}{3}\le |x|\le \frac{1}{2}$ contributes to the error in (\ref{eq-ptN1}).
	
	\section{Acknowledgments}
	The results in this paper were presented at the International Conference on Combinatorics 2026. We are grateful to the conference participants for their helpful comments, special for the suggestions of Digjoy Paul, Pooja Singla, and S.~Velmurugan.
	J. Barman is deeply thankful to the University Grants Commission (UGC), India, for their invaluable support through the Fellowship Programme. K. Mahatab is supported by the ARG-MATRICS Programme (grant no. ANRF/ARGM/2025/002540/MTR).
	
	\section{Murnaghan-Nakayama Rule}  
	The Murnaghan--Nakayama rule is a powerful combinatorial tool for computing irreducible character values of the symmetric group $S_N$. Both recursive and non-recursive versions of this rule are known. Moreover, the rule admits a natural generalization to wreath products, allowing one to compute irreducible character values of $G\wr S_N$.
	
	The irreducible characters of $G\wr S_N$ are indexed by $t$-multipartitions of $N$. If $\chi^\lambda_\mu$ denotes the value of the irreducible character corresponding to $\lambda$ on the conjugacy class indexed by $\mu$, then the generalized Murnaghan--Nakayama rule computes $\chi^\lambda_\mu$ by successively removing rim hooks from the Young diagrams associated with the components of $\lambda$.
	\begin{definition}[Rimhook]
		Let $\lambda=(\lambda^{(1)},\ldots,\lambda^{(t)})$ be a $t$-multipartition. 
		A \emph{rimhook} of $\lambda$ is a collection of $t$ adjacent boxes in the Young diagram of some $\lambda^{(i)}$ that lie along the outer boundary. After removing these boxes, no boxes remain directly to the south or to the east of them. In addition, none of the boxes in the rimhook has a southeast neighbour(the rimhook cannot contain a $2\times2$ block of boxes).
	\end{definition}  
	
	\begin{figure}[ht]
		\centering
		\ytableausetup{boxsize=1.3em}
		
		\begin{tabular}{cccc}
			
			\textbf{Invalid} & \textbf{Invalid} & \textbf{Invalid} & \textbf{Valid rimhook of length $4$} \\[2mm]
			
			\begin{ytableau}
				*(red) & *(red) & *(white) \\
				*(red) & *(red)
			\end{ytableau}
			&
			\begin{ytableau}
				*(red) & *(white) & *(red) \\
				*(red) & *(red)
			\end{ytableau}
			&
			\begin{ytableau}
				*(red) & *(red) & *(red) \\
				*(red) & *(white)
			\end{ytableau}
			&
			\begin{ytableau}
				*(white) & *(red) & *(red) \\
				*(red) & *(red)
			\end{ytableau}
			
		\end{tabular}
		
		\caption{Examples of three invalid and one valid rimhooks of length $4$ in the Young diagram of $\lambda^{(1)}=((3,2))$. 
		}
	\end{figure}

	\begin{definition}[Rimhook decomposition]
		Let $\lambda = (\lambda^{(1)}, \dots, \lambda^{(t)})$ and $\mu = (\mu^{(1)}, \dots, \mu^{(t)})$ be $t$-multipartitions of $N$. A \emph{rim-hook decomposition} of $\lambda$ with respect to $\mu$ is a sequence of successive rim-hook removals from the components of $\lambda$, where the lengths of the removed rim hooks correspond to the parts of $\mu$. The process terminates when all boxes of $\lambda$ are removed. We denote the set of all such decompositions by $\mathrm{RHD}(\lambda, \mu)$. Throughout the paper, we fix the order of rim-hook removals by arranging all parts of the multipartition in decreasing order. When two parts have the same size, the part belonging to the earlier component is removed first.
	\end{definition}
	
	\begin{definition}[Height]
		The \emph{height} of a rim hook is the number of rows it occupies minus one. For instance, the rim hook of length $4$ shown in Figure~2 has height $1$. If $\rho$ is a rim-hook decomposition, then the height of $\rho$, denoted by $ht(\rho)$, is defined as the sum of the heights of all rim hooks appearing in $\rho$.
	\end{definition}
	We now recall the generalized Murnaghan--Nakayama rule, which provides a combinatorial formula for computing irreducible character values of the wreath product $G\wr S_N$. For a more elementary form of this result, we refer the reader to \cite[Proposition~2.8]{dong}.
	\begin{proposition}[Non-recursive Murnaghan--Nakayama rule {\cite[Theorem 4.4.10]{MR644144}}]
		\label{proposition:MNrule}
		Let $\lambda$ and $\mu$ be $t$-multipartitions of $N$, and let $\chi^{1},\chi^{2},\ldots,\chi^{t}$ denote the irreducible characters of $G$ and $c_1, c_2,\dots,c_t$ denote the conjugacy classes of $G$.
		For each rimhook decomposition $\rho \in RHD(\lambda,\mu)$, define
		\begin{align*}
			\psi(\rho)&=\prod_{i=1}^{t}\prod_{j=1}^{t}\left( \chi^{i}(c_j)^{a_{ij}}\right),
		\end{align*}
		where $a_{ij}=\#\{\text{rimhooks $h\in \rho$ from  $\mu^{(j)}$ in $\lambda^{(i)}$ by $\rho$ }\}$.
		\newline
		Then,
		\begin{equation*}
			\chi^\lambda_\mu=\sum_{\rho\in RHD(\lambda,\mu)}(-1)^{ht(\rho)}\psi(\rho),
		\end{equation*}
		where $ht(\rho)$ represents the height of the rimhook decomposition $\rho$.
	\end{proposition}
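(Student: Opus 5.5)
The plan is to build the irreducible characters of $G\wr S_N$ explicitly by Clifford theory, evaluate them on a class representative, and then reduce to the ordinary non-recursive Murnaghan--Nakayama rule for each symmetric group factor $S_{n_i}$.

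\textbf{Step 1: the conjugacy classes.} Write elements of $G\wr S_N$ as $(g_1,\dots,g_N;\sigma)$. For each cycle $(a_1\,a_2\,\cdots\,a_r)$ of $\sigma$, the \emph{cycle product} $g_{a_r}\cdots g_{a_1}$ has a $G$-conjugacy class $c_j$ that depends only on the cycle. Its length $r$ is recorded as a part of $\mu^{(j)}$. This gives the standard bijection between conjugacy classes of $G\wr S_N$ and $t$-multipartitions $\mu$ of $N$.

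\textbf{Step 2: the irreducible characters.} Given $\lambda$ with $|\lambda^{(i)}|=n_i$, proceed as follows.
\begin{itemize}
\item On $G^{n_i}$ take the character $(\chi^i)^{\otimes n_i}$ and extend it to $G\wr S_{n_i}$ by letting $S_{n_i}$ permute the tensor factors.
\item Multiply this extension by the inflation of the Specht character $\chi^{\lambda^{(i)}}$ of $S_{n_i}$.
\item Induce the outer product of these characters from the Young-type subgroup $H=\prod_i G\wr S_{n_i}$ up to $G\wr S_N$.
\end{itemize}
Clifford theory, applied to the normal subgroup $G^N$ and stabilizers $G\wr\prod_i S_{n_i}$, shows that these induced characters are irreducible and pairwise distinct. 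Counting them against the formula for the number of irreducibles recalled in the introduction shows that they exhaust the irreducible characters.

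\textbf{Step 3: evaluating the extended character.} Take an element $(g;\sigma)\in G\wr S_{n_i}$. Computing the trace of the twisted permutation action on $V^{\otimes n_i}$ cycle by cycle gives
\[
\prod_{\text{cycles}}\chi^i(\text{cycle product}).
\]
This is the usual trace computation, in which a cyclic shift composed with a diagonal element has trace equal to the trace of the cycle product.

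Next apply the induced-character formula. The cosets of $H$ meeting the class of $\mu$ correspond exactly to the ways of assigning each cycle of a fixed representative $(g;\sigma)$ to one of the $t$ components, such that the cycle lengths assigned to component $i$ sum to $n_i$. The centralizer factors cancel, so each such assignment appears once. Hence
\[
\chi^\lambda_\mu=\sum_{\text{assignments}}\ \prod_{i,j}\chi^i(c_j)^{a_{ij}}\ \prod_i\chi^{\lambda^{(i)}}(\text{cycle type assigned to } i).
\]

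\textbf{Step 4: the rim-hook expansion.} Expand each $\chi^{\lambda^{(i)}}$ by the classical non-recursive Murnaghan--Nakayama rule, as a signed sum over rim-hook sequences removed in the prescribed order. An assignment of cycles together with such rim-hook sequences for every component is the same data as a rim-hook decomposition $\rho\in RHD(\lambda,\mu)$ with the global removal order fixed in the definition. Under this identification the sign is $(-1)^{ht(\rho)}$ and the $G$-factor is $\psi(\rho)$.

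\textbf{Main obstacle.} I expect the hardest part to be the bookkeeping in Step 3. One must check that the induced-character sum produces each assignment exactly once, rather than with a multiplicity coming from centralizer orders or from equal-length cycles with the same $c_j$. One must also check that the fixed ordering convention for ties makes the correspondence between assignments and elements of $RHD(\lambda,\mu)$ bijective. The approach I would try is to verify this by labelling the cycles of the fixed representative, so that cycles of equal length are distinguishable and no identifications occur.
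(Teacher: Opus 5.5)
The paper gives no proof of this proposition. It quotes the result from James--Kerber (Theorem 4.4.10) and uses it only as input for the vanishing lemma. Your proposal supplies a self-contained derivation along the standard lines, and it is correct:
\begin{itemize}
\item \emph{Irreducibles.} They are the characters induced from $H=\prod_i G\wr S_{n_i}$ of $\prod_i\widetilde{(\chi^i)^{\otimes n_i}}\cdot\chi^{\lambda^{(i)}}$. This follows from the Clifford correspondence together with Gallagher's theorem. The inertia group of $\bigotimes_a\chi^{i_a}$ is conjugate to $H$ precisely because the $\chi^i$ are pairwise distinct.
\item \emph{Induced-character formula.} In its coset form, the cosets $kH$ with $k^{-1}xk\in H$ are exactly the ordered set partitions $(A_1,\dots,A_t)$ with $|A_i|=n_i$ and each $A_i$ a union of cycles of the fixed representative. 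So each labelled assignment appears once, and no centralizer factors ever arise.
\item \emph{Rim-hook expansion.} The classical rule holds for any fixed order of the parts, and removals in different components do not interact. Hence interleaving the per-component rim-hook sequences according to the paper's global order is a bijection onto $RHD(\lambda,\mu)$, preserving both heights and the counts $a_{ij}$.
\end{itemize}
Labelling the cycles, as you suggest, is exactly what makes equal parts of $\mu$ give distinct decompositions. This matches the paper's fixed-order definition.

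Compared with the bare citation, your route makes explicit two things the statement leaves implicit. First, $\lambda^{(i)}$ is attached to $\chi^i$. Second, the $G$-factor is $\chi^i$ evaluated at the cycle product produced by the trace computation in Step 3, so the labelling of classes in Step 1 must use that same product for your chosen multiplication law.

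One small simplification: the counting in Step 2 is unnecessary, since the Clifford--Gallagher argument already shows that every irreducible arises exactly once. If you do want to count, use the class count from your own Step 1 (the number of irreducibles equals the number of classes) rather than the formula quoted in the introduction.
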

	If $G$ is trivial, this reduces to the classical Murnaghan--Nakayama rule
	for the irreducible characters of $S_N$. 
	Thus, for $\lambda\vdash N$ and $\mu\in S_N$,
	\begin{equation*}
		\chi^\lambda_\mu=\sum_{\rho\in RHD(\lambda,\mu)}(-1)^{ht(\rho)}.
	\end{equation*}
	\textbf{Example.}
	Consider the $2$-multipartitions
	\[
	\lambda=((2,1,1,1),(2,2))
	\qquad\text{and}\qquad
	\mu=((3,2),(3,1)).
	\]
	\begin{displaymath}
		\ytableausetup{centertableaux}
		\ytableaushort
		{5 1, 3, 2, 1}
		* {2,1,1,1} 
		\hspace{0.2cm}
		\ytableausetup{centertableaux}
		\ytableaushort
		{3 2,2 1}
		* {2,2} 
		\longrightarrow
		\ytableausetup{centertableaux}
		\ytableaushort
		{\none\none, X, X, X}
		* {2,1,1,1} 
		\hspace{0.2cm}
		\ytableausetup{centertableaux}
		\ytableaushort
		{\none\none,\none\none}
		* {2,2} 
		\longrightarrow
		\ytableausetup{centertableaux}
		\ytableaushort
		{\none\none}
		* {2} 
		\hspace{0.2cm}
		\ytableausetup{centertableaux}
		\ytableaushort
		{\none X, X X}
		* {2,2} 
		\longrightarrow
		\ytableausetup{centertableaux}
		\ytableaushort
		{X X}
		* {2} 
		\hspace{0.2cm}
		\ytableausetup{centertableaux}
		\ytableaushort
		{\none }
		* {1} 
		\longrightarrow
		\ytableaushort
		{X}
		* {1} 
	\end{displaymath}
	The above figure illustrates one rim-hook decomposition of $\lambda$ by $\mu$. The first Young diagrams are labeled with their hook lengths. Since the largest part of $\mu$ is $3$, we begin by removing a rim hook of length $3$ from $\lambda$. As both $\lambda^{(1)}$ and $\lambda^{(2)}$ contain a rim hook of length $3$, this choice gives rise to two possible decompositions. In the figure, we choose the first possibility and indicate the removed boxes by ``X''. The resulting multipartition is
	\[
	((2),(2,2)).
	\]
	The next part to be removed is the part $3$ from $\mu^{(2)}$. At this stage there is only one possible rim hook of length $3$, whose removal yields
	\[
	((2),(1)).
	\]
	The remaining parts are then removed uniquely, and the process continues in the same manner until all boxes have been removed.
	
	Let $\rho_1$ denote the above rim-hook decomposition. Then
	\begin{align*}
		ht(\rho_1)=2+1+0+0=3, \quad\text{and} \quad a_{11}=2,\quad a_{12}=0,\quad a_{21}=0,\quad a_{22}=2.
	\end{align*}
	The second rim-hook decomposition of $\lambda$ by $\mu$ is as follows:
	\begin{displaymath}
		\ytableausetup{centertableaux}
		\ytableaushort
		{5 1, 3, 2, 1}
		* {2,1,1,1} 
		\hspace{0.2cm}
		\ytableausetup{centertableaux}
		\ytableaushort
		{3 2,2 1}
		* {2,2} 
		\longrightarrow
		\ytableausetup{centertableaux}
		\ytableaushort
		{\none\none, \none, \none, \none}
		* {2,1,1,1} 
		\hspace{0.2cm}
		\ytableausetup{centertableaux}
		\ytableaushort
		{\none X, X X}
		* {2,2} 
		\longrightarrow
		\ytableausetup{centertableaux}
		\ytableaushort
		{\none\none, X, X, X}
		* {2,1,1,1}
		\hspace{0.2cm}
		\ytableausetup{centertableaux}
		\ytableaushort
		{\none }
		* {1} 
		\longrightarrow
		\ytableausetup{centertableaux}
		\ytableaushort
		{X X}
		* {2} 
		\hspace{0.2cm}
		\ytableausetup{centertableaux}
		\ytableaushort
		{\none }
		* {1} 
		\longrightarrow
		\ytableaushort
		{X}
		* {1} 
	\end{displaymath}

	\begin{lemma}\label{lemma-Mur}
		Let $\lambda$ and $\mu$ be two $t$-multipartitions of $N$. If $\mu$ has a largest part of size $k$ and $\lambda$ is a $k$-core multipartion, then $\chi^{\lambda}_{\mu}=0.$   
	\end{lemma}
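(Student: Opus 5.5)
We use the non-recursive Murnaghan--Nakayama rule of Proposition~\ref{proposition:MNrule} and show that $\mathrm{RHD}(\lambda,\mu)$ is empty. Then the sum defining $\chi^\lambda_\mu$ has no terms and $\chi^\lambda_\mu=0$.

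\emph{Fixing the first removal.} By the convention in the definition of rim-hook decomposition, parts of $\mu$ are removed in decreasing order of size. Since the largest part of $\mu$ equals $k$, every $\rho\in\mathrm{RHD}(\lambda,\mu)$ begins by removing a rim hook of length exactly $k$ from some component $\lambda^{(i)}$ of the original multipartition $\lambda$. (If the earlier-component tie-break applies, it only decides which $\mu^{(j)}$ the part is attributed to; the length is still $k$.) So it suffices to show that no component $\lambda^{(i)}$ contains a rim hook of length $k$.

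\emph{The standard bijection.} For a single partition $\nu$, rim hooks of $\nu$ correspond bijectively to boxes of the Young diagram of $\nu$. The box $b$ in row $r$ and column $c$ determines the rim segment running from the end of row $r$ down to the bottom of column $c$. This segment is connected, contains no $2\times 2$ block, and removing it leaves a Young diagram. Its length equals the hook length $h(b)=\nu_r-c+\nu'_c-r+1$. Conversely, every removable rim hook arises this way from the box in the row of its northeast end and the column of its southwest end.

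\emph{Conclusion.} By hypothesis each $\lambda^{(i)}$ is a $k$-core, so none of its hook lengths is divisible by $k$. In particular none equals $k$, so $\lambda^{(i)}$ has no rim hook of length $k$. Hence the first step of any decomposition is impossible, $\mathrm{RHD}(\lambda,\mu)=\emptyset$, and $\chi^\lambda_\mu=0$. The factors $\psi(\rho)$ involving the characters of $G$ play no role.

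\emph{Main obstacle.} The only real content is the rim-hook/hook-length bijection. The paper has not stated it, so I would either cite it as standard (e.g.\ James--Kerber) or sketch the correspondence as above. I would also note that we never need to remove rim hooks of length $k$ from the already-reduced multipartition after some removals. Only the very first step is needed, and that is exactly why the decreasing-order convention matters.
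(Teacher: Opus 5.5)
Your proof is correct and is essentially the argument the paper intends. The paper gives no separate proof; it only invokes the generalized Murnaghan--Nakayama rule (Proposition~\ref{proposition:MNrule}) with its decreasing-order convention, under which the first removal must be a rim hook of length $k$, impossible in a $k$-core, so $\mathrm{RHD}(\lambda,\mu)=\emptyset$. Your explicit rim-hook/hook-length correspondence simply fills in the standard fact the paper leaves implicit.
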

	
	Our proof of Theorem~\ref{thm-ztNm} relies on the following inequality (see Proposition~\ref{proposition:MNrule} and Lemma~\ref{lemma-Mur}), which follows from the generalized Murnaghan--Nakayama rule:
	\begin{align}\label{eq-main}
		Z_t(N)&\ge \sum_{k=1}^{N}c_t(N,k)\left(tp_{t,k}(N-k)-t^{2}p_{t,k}(N-2k)\right),
	\end{align}
	where $p_{t,k}(N-k)$ counts $t$-multipartitions of $N-k$ whose parts in every component 
	do not exceed $k$. Since $k$ can occur as the largest part in any of the $t$ components, we multiply by $t$. However, this may lead to overcounting, as adding a part $k$ to different components of a $t$-multipartition of $N-t$ can produce the same $t$-multipartition of $N$. For such an overcount to occur, two distinct components $t$-multipartitions of $N-t$ must have $k$ as a largest part, together with additional conditions ensuring that the resulting multipartitions coincide. Thus, the overcounted cases are bounded by $t^2p_{t,k}(N-2k)$.
	Applying Theorem~\ref{thm-ctNm}, Lemma~\ref{lemma-erdos}, and Lemma~\ref{lemma-erdos0} to the inequality \eqref{eq-main}, we obtain the lower bound for $Z_t(N)$.
	
	\section{Proof of Theorems~\ref{thm-1.1} and~\ref{thm-ctNm}}
	Recall that $M = N + \frac{(k^2-1)t}{24}$, and let $\vartheta \in \mathbb{C}$ satisfy $|\vartheta| \le 1$. Before proving Theorems~\ref{thm-1.1} and~\ref{thm-ctNm}, we establish an auxiliary proposition for the asymptotic formula of $c_t(N,k)$. Throughout the proof, the value of $\vartheta$ may depend on the parameters $N$, $k$, $t$, $x$, and $y$, and may vary from one occurrence to another. In Theorem~\ref{thm-1.1}, we prove that $y$ is the solution of 
	$ \frac{t\bigl(\mu_1(kiy) - \mu_1(iy)\bigr)}{y^2} - M = 0.$
	Hence, imposing this restriction on $y$ in the proposition does not restrict its generality.
	\begin{proposition}\label{thm-main} Let $y$ be chosen such that 
		\begin{align}\label{eq-assum}
			\left|\frac{t(\mu_{1}(kiy)-\mu_1(iy))}{y^{2}}-M\right|<\frac{2}{25y},
		\end{align} and assume that $\min(k,\frac{1}{y})\ge 1000$. Then,
		\begin{align*}
			c_t(N,k)=\frac{y^{\frac{3}{2}}\exp(2\pi My)g_{t}(iy,k)}{\sqrt{t(\mu_{2}(iy)-\mu_2(kiy))}}\left(1+\vartheta\frac{3.5y}{t(\mu_{2}(iy)-\mu_2(kiy))}\right).    
		\end{align*}
	\end{proposition}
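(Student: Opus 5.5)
We follow the decomposition \eqref{eq-ptN1}: split the integral \eqref{eq-integral} into the central range $|x|<y/3$ and the tail $y/3\le|x|\le 1/2$. The central range gives the main term through a Gaussian integral built from the expansion \eqref{eq-taylor}. The tail is bounded using Proposition~3.1 of \cite{tyler}, applied to both $\eta(z)^{-t}$ and $\eta(kz)^{kt}$. Throughout, write $A=t(\mu_2(iy)-\mu_2(kiy))$.

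\textbf{Central range.} Insert \eqref{eq-taylor} into the first integral of \eqref{eq-ptN1}.

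The linear term combines with $-2\pi iMx$ to give $2\pi i x\,(t(\mu_1(kiy)-\mu_1(iy))/y^2 - M)$. By \eqref{eq-assum}, its modulus is at most $2\pi|x|\cdot\frac{2}{25y}$, which is $O(1)$ on $|x|<y/3$.

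The quadratic term is
\[
2\pi i\,\frac{x^2}{2}\,\frac{A/t\cdot t}{(iy)^3} \;=\; -\pi x^2 A/y^3 ,
\]
which is a genuine Gaussian. One should check from the definition \eqref{eq-muk} that $A>0$ for $y>0$. Then
\[
\int_{\mathbb R}e^{-\pi x^2A/y^3}\,dx=\frac{y^{3/2}}{\sqrt{A}} ,
\]
which is exactly the claimed main factor.

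The cubic and quartic terms must be controlled by explicit bounds on $\mu_3,\mu_4$ near the imaginary axis. From the $q$-expansion of $\log\eta$, one has $\mu_m(z)=\frac{m!\,\text{const}}{(2\pi i)\,\ldots}$-type bounds, so that $|\mu_m(z)|\ll |z|^{m+1}\sum n\sigma(n)\ldots$ plus the polynomial pieces coming from $\pi iz/12$. I would use Tyler's explicit estimates for $\mu_m$ at $iy$ and at $kiy$. The point is that $\mu_m(kiy)$ is tiny once $k\ge1000$ is large relative to $1/y$, or else behaves like the modular-transformed term.

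With these bounds, expand
\[
\exp\bigl(\text{linear}+\text{cubic}+\text{quartic}\bigr)=1+(\text{linear}+\text{cubic})+O(\cdots) .
\]
The odd terms integrate to zero against the even Gaussian. This leaves the even second-order pieces, namely the square of the linear term, the product of linear and cubic terms, and the quartic term. Each of these contributes $O(y/A)$ relative to the main term. Completing the Gaussian tail from $|x|\ge y/3$ costs $e^{-cA/y}$, which is negligible compared with $y/A$.

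\textbf{Tail.} For $y/3\le|x|\le1/2$, Tyler's Proposition~3.1 gives $|\eta(z)|^{-1}/\eta(iy)^{-1}\le e^{-c/y}$ for some explicit $c$. For $\eta(kz)^{k}$, one uses the trivial bound $|\eta(kz)|\le\eta(kiy)$, up to the factor $\exp$ from the product, with a comparison of $|1-q^{kn}|$ against $1-|q|^{kn}$. Raised to the power $t$, the ratio $|g_t(z,k)|/g_t(iy,k)$ is at most $e^{-ct/y}$ times a controlled factor. Since $A\asymp t/y$ in the relevant range, this is far smaller than $y^{3/2}A^{-1/2}\cdot y/A$, so the tail is absorbed into the error.

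The main obstacle is making all constants explicit so that the error is at most $3.5\,y/A$ uniformly in $k$ and $t$. This requires numerical bounds for $|\mu_3|,|\mu_4|$ on the disc $|x|<y/3$, and a lower bound $A\ge c\,t/y$ valid both when $ky$ is small and when $ky$ is large. I would take Tyler's explicit inequalities for the $\mu_m$ as given, add the $k$-scaled versions, and use $\min(k,1/y)\ge1000$ to absorb all secondary exponentials into the numerical constant.
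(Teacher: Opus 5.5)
Your overall architecture is the paper's: split at $|x|=y/3$, take the Gaussian main term from the quadratic term, and use Tyler's Proposition~3.1 for the tail. But the tail step as you describe it fails. The hypothesis $\min(k,1/y)\ge 1000$ allows $ky$ to be arbitrarily small, and then $\eta(kz)^{kt}$ and $\eta(z)^{-t}$ cannot be estimated separately.

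First, $|1-q^{kn}|\ge 1-|q|^{kn}$, so in fact $|\eta(kz)|\ge \eta(kiy)$ always; your ``trivial bound'' points the wrong way. The correction factor $\prod_n\bigl((1+|q|^{kn})/(1-|q|^{kn})\bigr)^{kt}$ is roughly $\exp(\pi t/(8y))$ when $ky$ is small, which swamps any $e^{-ct/y}$ you can get from $\eta(z)^{-t}$.

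Second, and more fundamentally, the claimed decay $e^{-ct/y}$ of $|g_t(z,k)|/g_t(iy,k)$ is false in this regime. By the modular transformation, at $|x|=y/3$ one has $|\eta(kz)/\eta(kiy)|^{kt}\approx (10/9)^{-kt/4}e^{\pi t/(120y)}$ and $|\eta(iy)/\eta(z)|^{t}\approx (10/9)^{t/4}e^{-\pi t/(120y)}$. The exponentials cancel, and the quotient is only about $(10/9)^{-(k-1)t/4}$, i.e.\ $e^{-ctk}$. That is why the paper applies Tyler's Proposition~3.1 to the whole quotient $\eta(kz)^k/\eta(z)$, obtaining the tail term $\vartheta y^{t}\exp\bigl(-\frac{t}{70}\min(k,\frac1y)\bigr)$ in \eqref{eq-p(N,t)}. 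It then absorbs this term by checking $\frac{\alpha^{3/2}}{y}\,y^{t}\exp\bigl(-\frac{t}{70}\min(k,\frac1y)\bigr)<0.05$ via the upper bound of Tyler's Lemma~4.2; this is where $\min(k,1/y)\ge 1000$ is really used.

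The same cancellation undermines your central-range bookkeeping. For small $ky$, $\mu_m(iy)$ and $\mu_m(kiy)$ are both $O(1)$ with identical leading constants (e.g.\ $\mu_2(iy)\approx\frac1{12}-\frac{y}{4\pi}$, $\mu_3(iy)\approx-\frac14+\frac{y}{2\pi}$), so $A\approx t(k-1)y/(4\pi)$. Separate bounds on $\mu_3,\mu_4$ then overestimate the cubic term by a factor of order $1/(ky)$, which destroys your claim that every second-order piece is $O(y/A)$. What you need are relative bounds on the differences, Tyler's Lemmas~4.3(iii) and~4.4, which yield \eqref{eq-tayfinal}: the cubic and quartic terms appear as factors $2i\xi x/y$ and $3\vartheta x^2/y^2$ of the quadratic term, with $\xi\in(-1,1)$ real.

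Relatedly, $A\asymp t/y$ is wrong: $A\asymp t\min(1,ky)$ is bounded. The correct lower bound is on $\alpha=A/y$, namely $\alpha\ge\frac{t}{y}\min(\frac1{16},\frac{(k-1)y}{8\pi})>38$. With $\alpha>38$ and $|\beta|<\frac{2}{25}$ from \eqref{eq-assum}, Tyler's Lemma~4.1 evaluates the central integral as $\frac{y}{\sqrt\alpha}(1+\vartheta\frac{3.45}{\alpha})$, and $3.45+0.05=3.5$ is the constant in the statement. If you redo that lemma by hand, two things need fixing. Your list of second-order terms omits the square of the cubic term, which is also of order $1/\alpha$. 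And on $|w|<1/3$ the cubic term is not small, so the expansion of the exponential must exploit that the linear and cubic terms are purely imaginary.
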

	\begin{proof}
		Recall (\ref{eq-ptN1}), and apply Proposition~3.1 of \cite{tyler} to get
		\begin{align}\label{eq-p(N,t)}
			\notag
			c_t(N,k)&=\exp(2\pi My)g_{t}(iy,k)\int_{-y/3}^{y/3}\exp\left(-2\pi i Mx+2\pi i\frac{1}{2\pi i}\log\frac{g_{t}(z,k)}{g_{t}(iy,k)}\right)dx\\
			&+\exp(2\pi My)g_{t}(iy,k)\left(\vartheta y^{t} \exp\left(-\frac{t}{70}\min \left(k,\frac{1}{y}\right) \right)\right).
		\end{align}   
		From the Taylor expansion of $\frac{1}{2\pi i}\log\frac{g_t(z,k)}{g_t(iy,k)}$ in (\ref{eq-taylor}), we have
		\begin{align}\label{eq-tayfinal}
			\frac{1}{2\pi i}\log \frac{g_{t}(z,k)}{g_{t}(iy,k)}&=tx\frac{\mu_{1}(iy)-\mu_1(kiy)}{(iy)^{2}}
			+t\frac{x^{2}}{2!}\frac{\mu_{2}(iy)-\mu_2(kiy)}{(iy)^{3}}\left(1+2i\xi\frac{x}{y}+3\vartheta \frac{x^{2}}{y^{2}}\right),
		\end{align}
		where $\xi\in(-1,1)$ and $|\vartheta|\le 1$, due to Lemma~4.3$(iii)$ and Lemma~4.4 of \cite{tyler}.
		\newline
		Let
		\begin{align}\label{eq-alpha}
			\alpha=\frac{t(\mu_{2}(iy)-\mu_2(kiy))}{y} \qquad\text{and} \qquad \beta=y\left(\frac{t(\mu_{1}(iy)-\mu_1(kiy))}{(iy)^{2}}-M\right),
		\end{align}
		which satisfy
		\begin{align*}
			\alpha\ge \frac{t}{y}\min\left(\frac{1}{16}, \frac{(k-1)y}{8\pi}\right)>38 \quad\text{and}\quad |\beta|<\frac{2}{25} 
		\end{align*}
		by Lemma~4.2 of \cite{tyler} and the assumption of (\ref{eq-assum}). Substituting $\frac{1}{2\pi i}\log\frac{g_t(z,k)}{g_t(iy,k)}$ from \eqref{eq-tayfinal} in the integrand of \eqref{eq-p(N,t)}, we obtain
		\begin{align*}
			\notag
			\exp\left(-2\pi i Mx+2\pi i\frac{1}{2\pi i}\log\frac{g_{t}(z,k)}{g_{t}(iy,k)}\right)&=\exp\left(\frac{2\pi ix}{y}y\left(\frac{t(\mu_{1}(iy)-\mu_{1}(kiy))}{(iy)^{2}}-M\right)\right)\\
			\notag
			&\times\exp\left(-\pi\frac{x^{2}}{y^{2}}\frac{t(\mu_{2}(iy)-\mu_{2}(kiy))}{y}\left(1+2i\xi\frac{x}{y}+3\vartheta \frac{x^{2}}{y^{2}}\right)\right)\\
			&=\exp\left(\frac{2\pi i\beta x}{y}\right)\exp\left(-\pi\alpha\frac{x^{2}}{y^{2}}\left(1+2i\xi\frac{x}{y}+3\vartheta \frac{x^{2}}{y^{2}}\right)\right).
		\end{align*}
		By changing the variable $w=x/y$ and applying Lemma~4.1 of \cite{tyler}, we derive
		\begin{align*}
			\notag
			&\int_{-y/3}^{y/3}\exp\left(-2\pi i Mx+2\pi i\frac{1}{2\pi i}\log\frac{g_{t}(z,k)}{g_{t}(iy,k)}\right)dx\\&=\int_{-1/3}^{1/3}\exp\left(2\pi i\beta w\right)\exp\left(-\pi\alpha w^{2}\left(1+2i\xi w+\vartheta 3w^{2}\right)\right)y\,dw=\frac{y}{\sqrt{\alpha}}\left(1+\vartheta\frac{3.45}{\alpha}\right).
		\end{align*}
		It follows from the above equation and \eqref{eq-p(N,t)} that
		\begin{align}\label{eq-pNT}
			c_t(N,k) &=\exp(2\pi My)g_{t}(iy,k)\frac{y}{\sqrt{\alpha}}\left(1+\vartheta\frac{3.45}{\alpha}+\frac{\sqrt{\alpha}}{y}\vartheta y^{t} \exp\left(-\frac{t}{70}\min \left(k,\frac{1}{y}\right) \right)\right).
		\end{align}  
		Applying the upper bound from Lemma~4.2 of \cite{tyler} under the
		assumption that $\min (k,\frac{1}{y})\ge 1000$, we obtain
		\begin{align*}
			\frac{\alpha^{\frac{3}{2}}}{y} y^{t} \exp\left(-\frac{t}{70}\min \left(k,\frac{1}{y}\right) \right)<0.05.  
		\end{align*}
		Substituting $\alpha$ from \eqref{eq-alpha} in (\ref{eq-pNT}) yields
		\begin{align*}
			c_t(N,k)=\frac{y^{\frac{3}{2}}\exp(2\pi My)g_{t}(iy,k)}{\sqrt{t(\mu_{2}(iy)-\mu_2(kiy))}}\left(1+\vartheta\frac{3.5y}{t(\mu_{2}(iy)-\mu_2(kiy))}\right).    
		\end{align*}
		This completes the proof.
	\end{proof}
	We now turn to the proofs of our main results. We first recall $g_t(z,k)$ and $M$ from \eqref{eq-atz}, a key function used throughout this section:
	\begin{align*}
		g_{t}(z,k)=\frac{\eta(kz)^{kt}}{\eta(z)^{t}}\quad\text{and}\quad M=N+\frac{(k^{2}-1)t}{24}.
	\end{align*}
	In part $(i)$ of the following proof, we prove the uniqueness of the saddle point $y$, while in part $(ii)$, we establish a general asymptotic formula for $c_t(N,k)$.
	\begin{proof}[\textbf{\boldmath Proof of Theorem \ref{thm-1.1}$(i)$}]
		To establish the uniqueness of $y$, we first solve the equation
		$\frac{d}{dz}\left(-2\pi iMz+\log g_t(z,k)\right)=0$ at $z=iy.$ Hence, 
		\begin{align*}
			t\left(k\frac{d}{dz}\log \eta(kz)-\frac{d}{dz}\log \eta(z)\right)=2\pi iM.  
		\end{align*}
		Recall that $\frac{d}{dz}\log \eta(kz) = -\frac{2\pi i}{k z^2}\mu_1(kz)$, where $\mu_1$ is defined in \eqref{eq-muk}. Setting $z=iy$ yields
		\begin{equation*}
			\frac{t(\mu_{1}(kiy)-\mu_1(iy))}{y^{2}}=M=N+\frac{(k^{2}-1)t}{24}.   
		\end{equation*}
		Next, we prove that the solution $y>0$ is unique. From the explicit 
		expressions for $\mu$ given in \cite{tyler} (see equation $(4.14)$), we see that for $y\ge 1$,
		\begin{equation}\label{eq-pf11}
			\mu_{1}(iy)=\frac{y^{2}}{24}-\sum_{n=1}^{\infty}y^{2}\sigma(n)\exp(-2\pi ny). 
		\end{equation} 
		On the other hand, for $y < 1$, it follows that
		\begin{equation}\label{eq-pf12}
			\mu_{1}(iy)=-\frac{1}{24}+\frac{y}{4\pi}+\sum_{n=1}^{\infty}\sigma(n)\exp\left(-\frac{2\pi n}{y}\right).
		\end{equation}
		Combining the above two expressions, we see that
		\begin{align*}
			\lim_{y\to \infty} \frac{t(\mu_{1}(kiy)-\mu_1(iy))}{y^{2}} =\frac{(k^{2}-1)t}{24} \qquad \text{and}\qquad\lim_{y\to 0^{+}}\frac{t(\mu_{1}(kiy)-\mu_1(iy))}{y^{2}}=\infty.
		\end{align*}
		Hence, for any positive integer $N$, there exists some $y > 0$ such that
		\begin{equation*}
			\frac{t(\mu_{1}(kiy)-\mu_1(iy))}{y^{2}}=M=N+\frac{(k^{2}-1)t}{24}.   
		\end{equation*}
		We now prove that the solution $y>0$ is unique. From the explicit 
		formulas for $\mu_k$ given in \cite{tyler} (see (4.14) and (4.16)), 
		we obtain $$\mu_{2}(z)=-2\mu_{1}(z)+z\mu_{1}^{\prime}(z).$$ 
		Invoking Lemma~4.2 of \cite{tyler}, we arrive at
		\begin{align*}
			\frac{d}{dy}\left(\frac{t(\mu_{1}(kiy)-\mu_1(iy))}{y^{2}}\right)=\frac{t(\mu_{2}(kiy)-\mu_2(iy))}{y^{3}}<0.   
		\end{align*}
		Hence, the solution $y>0$ is unique.
	\end{proof}
	We now prove the second part of the theorem, namely the asymptotic formula for $c_t(N,k)$.
	\begin{proof}[\textbf{\boldmath Proof of Theorem~\ref{thm-1.1}(ii)}]
		We proved in \eqref{eq-t1.1} that $y$ is a unique solution of $\frac{t(\mu_{1}(kiy)-\mu_1(iy))}{y^{2}}=M$, so it is obvious that $\left|\frac{t(\mu_{1}(kiy)-\mu_1(iy))}{y^{2}}-M\right|\ll\frac{1}{y}$. Hence, from Proposition~\ref{thm-main}, we obtain
		\begin{align*}
			c_t(N,k)=\frac{y^{\frac{3}{2}}\exp(2\pi My)g_{t}(iy,k)}{\sqrt{t(\mu_{2}(iy)-\mu_2(kiy))}}\left(1+O\left(\frac{y}{t(\mu_{2}(iy)-\mu_2(kiy))}\right)\right).    
		\end{align*}
	\end{proof}
	We now establish an asymptotic formula for $c_t(N,k)$ when $ky\ge 1$.
	\begin{proof}[\textbf{\boldmath Proof of Theorem~\ref{thm-ctNm}}]
		By the proof of Lemma~3.3 of \cite{barman1}, for any $ky\ge 1$ and 
		\begin{align}\label{eq-delta}
			1<\delta<\frac{1}{(1-\exp(-2\pi ky))^{2}}<1.01,
		\end{align}
		we have
		\begin{align*}
			\eta(kiy)=\exp\left(-\frac{\pi ky}{12}-\delta\exp\left(-2\pi ky\right)\right). 
		\end{align*}
		For $y < 1$ and $1 < \nu < 1.01$, Lemma~3.4 of \cite{barman1} yields
		\begin{equation*}
			\eta(iy) = y^{-\frac{1}{2}} \exp\left( -\frac{\pi}{12y} - \nu e^{-\frac{2\pi}{y}} \right).
		\end{equation*}
		Applying the above estimates, we derive
		\begin{align*}
			g_t(iy,k)=y^{\frac{t}{2}}\exp\left(-\frac{\pi k^{2}ty}{12}-\delta kt\exp(-2\pi ky)+\frac{\pi t}{12y}+t\nu\exp\left(-\frac{2\pi }{y}\right)\right).   
		\end{align*}
		Inserting the above $g_t(iy,k)$ and Lemma~4.2 of \cite{tyler}$(iv)$ in Theorem~\ref{thm-1.1}$(ii)$ yields
		\begin{align}\label{eq-ctN2}
			c_t(N,k)= \frac{y^{\frac{t+3}{2}}\exp\left(2\pi Ny+\frac{\pi t}{12y}-\delta kt\exp(-2\pi ky)-\frac{\pi ty}{12}+t\nu\exp\left(-\frac{2\pi }{y}\right)\right)}{\sqrt{t(\mu_2(iy)-\mu_2(kiy))}}\left(1+O\left(y\right)\right). 
		\end{align}
		Now we solve $y$ explicitly from Theorem~\ref{thm-1.1}$(i)$. Combining (\ref{eq-pf11}) and (\ref{eq-pf12}), we deduce that
		\begin{align}\label{eq-pf23}
			y^{2}\left(N-\frac{t}{24}\right)+
			\frac{ty}{4\pi}-\frac{t}{24}\left(1-24S_1-24S_2\right)=0,
		\end{align}
		where
		\begin{align*}
			& S_1=\sum_{n=1}^{\infty}(ky)^{2}\sigma(n)\exp(-2\pi nky)\quad\text{and}\quad  S_2=\sum_{n=1}^{\infty}\sigma(n)\exp\left(-\frac{2\pi n}{y}\right).
		\end{align*}  
		Treating \eqref{eq-pf23} as a quadratic equation in $y$ and solving gives
		\begin{align*}
			y=-\frac{t}{8\pi\left(N-\frac{t}{24}\right)} +\frac{\sqrt{t}}{\sqrt{24\left(N-\frac{t}{24}\right)}}\sqrt{1+\frac{3t}{8\pi^{2}\left(N-\frac{t}{24}\right)}-24(S_1+S_2)}.   
		\end{align*} 
		Using a crude approximation $y\approx\sqrt{\frac{t}{24N}}$ in $S_{1}$ \footnote{Note that  $S_1=\sum_{n=1}^{\infty}(ky)^{2}\sigma(n)\exp(-2\pi nky)\le \frac{(\log N)^{2}}{16\pi^{2}\sqrt{N}}+O\left(\frac{(\log N)^{2}}{N}\right)$ for the specified range of $k$. This estimate is used in the derivation of (\ref{eq-soly}).} and $S_{2}$ \footnote{We also note that $S_2=\sum_{n=1}^{\infty}\sigma(n)\exp\left(-\frac{2\pi n}{y}\right)=O(N^{-2}).$}, and simplifying further, we obtain 
		\begin{equation}\label{eq-soly}
			y=\sqrt{\frac{t}{24N}}+\frac{C_{1}(N,k)}{N}+ \frac{C_{2}}{N}+O\left(\frac{(\log N)^{2}}{N^{\frac{3}{2}}}\right) . 
		\end{equation}
		is the solution of (\ref{eq-pf23}),  where $0<|C_1(N,k)|\le C_3(\log N)^{2}$ and, $C_2$ and $C_3$ are constants. We may also compute
		\begin{align}\label{eq-soly-inverse}
			y^{-1}=\sqrt{\frac{24N}{t}}\left(1-\frac{C_1(N,k)\sqrt{24}}{\sqrt{Nt}}-\frac{C_2\sqrt{24}}{\sqrt{Nt}}+O\left(\frac{(\log N)^{2}}{N}\right)\right).   
		\end{align}
		Combining \eqref{eq-soly} and \eqref{eq-soly-inverse} yields the following estimates:
		\begin{align}\label{eq-yy-inverse}
			\notag
			&2\pi Ny+\frac{\pi t}{12y}=\frac{2\pi}{\sqrt{6}}\sqrt{Nt}+O\left(\frac{(\log N)^{2}}{N^{\frac{1}{2}}}\right)\quad\text{and}\\
			&\exp\left(-\frac{\pi ty}{12}+tve^{-\frac{2\pi}{y}}\right)=1+O\left(N^{-\frac{1}{2}}\right).   
		\end{align}
		\begin{claim}\label{claim-2}
			Let $k\ge\frac{\sqrt{6}}{2\pi}\sqrt{\frac{N}{t}}\log \frac{N}{t}$. Then,
			\begin{equation*}
				\mu_{2}(iy)-\mu_{2}(kiy)=\frac{1}{12}\left(1+O\left(N^{-\frac{1}{2}}(\log N)^{3}\right)\right).
			\end{equation*} 
		\end{claim}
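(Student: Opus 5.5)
The plan is to compute $\mu_2(iy)$ and $\mu_2(kiy)$ separately from the explicit expansions \eqref{eq-pf11} and \eqref{eq-pf12}, using the identity $\mu_2(z)=-2\mu_1(z)+z\mu_1'(z)$ recorded in the proof of Theorem~\ref{thm-1.1}$(i)$. On the imaginary axis, $z\frac{d}{dz}=y\frac{d}{dy}$ for $z=iy$, so
\[
\mu_2(iy)=-2\mu_1(iy)+y\,\frac{d}{dy}\mu_1(iy).
\]
We use \eqref{eq-pf12} for the small argument $y<1$, since $y\asymp N^{-1/2}$ by \eqref{eq-soly}. We use \eqref{eq-pf11} for the large argument $u=ky$, since $ky\ge 1$ in the stated range of $k$.

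\textbf{The small argument.} Differentiating \eqref{eq-pf12} gives
\[
\mu_2(iy)=\frac{1}{12}-\frac{y}{4\pi}-2S_2+yS_2'(y),
\]
where $S_2=\sum_{n\ge1}\sigma(n)e^{-2\pi n/y}$ and $yS_2'(y)=\frac{2\pi}{y}\sum_{n\ge1} n\sigma(n)e^{-2\pi n/y}$. Since $y\asymp N^{-1/2}$, the last two terms are $O(e^{-cN^{1/2}})$. Hence $\mu_2(iy)=\frac1{12}+O(N^{-1/2})$.

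\textbf{The large argument.} Write $\mu_1(iu)=\frac{u^2}{24}-u^2E(u)$ with $E(u)=\sum_{n\ge1}\sigma(n)e^{-2\pi nu}$. A short computation gives
\[
\mu_2(iu)=-2\mu_1(iu)+u\frac{d}{du}\mu_1(iu)=-u^3E'(u)=2\pi u^3\sum_{n\ge1}n\sigma(n)e^{-2\pi nu}.
\]
For $u\ge1$ the $n=1$ term dominates, so $0<\mu_2(kiy)\ll (ky)^3e^{-2\pi ky}$.

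\textbf{Bounding the large-argument term.} This is the step I expect to need the most care. Substituting \eqref{eq-soly} into the hypothesis on $k$ gives
\[
ky\ge \frac{\sqrt6}{2\pi}\sqrt{\frac Nt}\log\frac Nt\cdot\sqrt{\frac{t}{24N}}\Bigl(1+O\bigl(N^{-1/2}(\log N)^2\bigr)\Bigr)=\frac{1}{4\pi}\log\frac Nt+O\bigl(N^{-1/2}(\log N)^3\bigr).
\]
The function $u\mapsto u^3e^{-2\pi u}$ is decreasing for $u>3/(2\pi)$, and $k$ may be as large as $N$. So $(ky)^3e^{-2\pi ky}$ is bounded by its value at this lower endpoint. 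The error in the exponent is $o(1)$ and only changes $e^{-2\pi ky}$ by a factor $1+o(1)$. This gives
\[
e^{-2\pi ky}\ll (N/t)^{-1/2}, \qquad \mu_2(kiy)\ll (\log N)^3N^{-1/2},
\]
with $t$ fixed.

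\textbf{Conclusion.} Subtracting the two estimates gives
\[
\mu_2(iy)-\mu_2(kiy)=\frac1{12}\Bigl(1+O\bigl(N^{-1/2}(\log N)^3\bigr)\Bigr),
\]
which is the claim. The only delicate point is tracking the perturbation of $ky$ coming from the $C_1(N,k)/N$ and $C_2/N$ terms of \eqref{eq-soly}. This can be absorbed because $k\cdot\frac{(\log N)^2}{N}$ stays controlled near the lower endpoint. For larger $k$, monotonicity of $u^3e^{-2\pi u}$ together with the positivity of the correction to $y$ up to lower order handles the rest.
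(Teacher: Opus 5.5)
Your proposal is correct and follows the same route as the paper. Both arguments evaluate $\mu_2(iy)$ from the transformed expansion as $\frac{1}{12}-\frac{y}{4\pi}$ plus exponentially small terms, and $\mu_2(kiy)=2\pi(ky)^3\sum_{n}n\sigma(n)e^{-2\pi nky}$ from the $q$-expansion. Both then use $ky\ge\frac{1}{4\pi}\log\frac{N}{t}+o(1)$ to get $O(N^{-1/2}(\log N)^3)$. You derive the two formulas from $\mu_2=-2\mu_1+z\mu_1'$ rather than quoting Tyler's (4.14), and you supply the bound on $\mu_2(kiy)$ that the paper only asserts.

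One small correction: your closing appeal to ``positivity of the correction to $y$'' is unnecessary, and the sign is wrong. The $\frac{ty}{4\pi}$ term and the sum $\sum_n (ky)^2\sigma(n)e^{-2\pi nky}$ both push $y$ below $\sqrt{t/(24N)}$. This does not matter, because your displayed lower bound for $ky$ uses only the uniform relative estimate $y=\sqrt{t/(24N)}\,(1+O(N^{-1/2}(\log N)^2))$, and that already covers every $k$ in the range.
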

		\begin{proof}[\textbf{\boldmath Proof of Claim \ref{claim-2}}]
			For given $y$, equation~(4.14) of \cite{tyler} gives
			\begin{align*}
				\mu_{2}(iy)&=\sum_{n=1}^{\infty}\left(\frac{2\pi n}{y}-2\right)\sigma(n) \exp\left(-\frac{2\pi n}{y}\right)+\frac{1}{12}-\frac{y}{4\pi}=\frac{1}{12}\left(1+O\left(N^{-\frac{1}{2}}\right)\right).
			\end{align*}
			Since $k\ge\frac{\sqrt{6}}{2\pi}\sqrt{\frac{N}{t}}\log \frac{N}{t}$ and $y$ is given by \eqref{eq-soly}, it follows from equation~(4.14) of \cite{tyler} that 
			\begin{equation*}
				\mu_{2}(kiy) = \sum_{n=1}^{\infty}(ky)^{3}(2\pi n)\sigma(n)\exp(-2\pi nky)=O\left(N^{-\frac{1}{2}}(\log N)^{3}\right)
			\end{equation*}
			Hence,
			\begin{equation*}
				\mu_{2}(iy)-\mu_{2}(kiy)=\frac{1}{12}\left(1+O\left(N^{-\frac{1}{2}}(\log N)^{3}\right)\right).
			\end{equation*}
		\end{proof}
		Substituting $y = \sqrt{\frac{t}{{24N}}}\left(1+O\left(N^{-\frac{1}{2}}\right)\right)$ from \eqref{eq-soly}, together with the estimates from \eqref{eq-yy-inverse} and the bound for $\mu_{2}(iy)-\mu_{2}(kiy)$ from the preceding claim in \eqref{eq-ctN2}, we simplify the expression for $c_t(N,k)$ as follows:
		\begin{align*}
			c_t(N,k)=p_t(N)\exp\left(-\delta kt\exp\left(-\frac{\pi k\sqrt{t}}{\sqrt{6N}}\right)\right)\left(1+O\left(N^{-\frac{1}{2}}(\log N)^{3}\right)\right),
		\end{align*}
		by applying the explicit formula for $p_t(N)$ given in Theorem~\ref{thm-murty}. For the given range of $k$ and $y$, we have $\delta=1+O(N^{-\frac{1}{2}})$, where $\delta$ is defined in \eqref{eq-delta}.
	\end{proof}
	\section{Proof of Theorem~\ref{thm-ztNm}}
	In this section, we derive a lower bound for $Z_t(N)$. To this end, we first prove two auxiliary lemmas that will play a key role in the proof of the main result.
	\begin{lemma}\label{lemma-erdos}
		The number of $t$-multipartitions $\mu$ with two parts $k_1$ and $k_2$ both larger than $K$ is
		\begin{align*}
			O\left(\frac{t^{2}p_t(N)}{\left(\log \frac{N}{t}\right)^{2}}\right),  
		\end{align*}
		where $C=\frac{2\pi }{\sqrt{6}}$ and 
		\begin{align*}
			K= C^{-1}\sqrt{\frac{N}{t}}\left(1+\frac{1}{B}\right)\log \frac{N}{t}-2C^{-1}\sqrt{\frac{N}{t}}\log \log \log \frac{N}{t} \quad\text{with} \quad \left(\frac{N}{t}\right)^{\frac{1}{2B}}=C^{-1}\log \frac{N}{t}.
		\end{align*}
	\end{lemma}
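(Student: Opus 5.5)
We bound the count by a union over the positions of the two large parts, followed by a generating-function (saddle-point) comparison. If $\mu$ has two parts $k_1,k_2>K$, then either both lie in the same component $\mu^{(i)}$ or they lie in two distinct components $\mu^{(i)},\mu^{(j)}$. There are at most $t$ choices in the first case and at most $t^2$ pairs in the second, so the total count is at most $t^2$ times the largest number of $t$-multipartitions of $N$ in which one or two specified components carry two parts exceeding $K$. Removing those two parts $k_1,k_2$ leaves a $t$-multipartition of $N-k_1-k_2$. The count is therefore bounded by
\[
t^2\sum_{k_1,k_2>K} p_t(N-k_1-k_2).
\]
This mirrors the Erd\H{o}s--Lehner argument for a single partition, which the paper used in \cite{barman1} for $t=1$.

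To estimate the double sum, we would use Theorem~\ref{thm-murty}. Write $C=\frac{2\pi}{\sqrt6}$, so that $p_t(n)\asymp n^{-(t+3)/4}\exp(C\sqrt{nt})$. For $m=k_1+k_2\le N/2$ we have $\sqrt{(N-m)t}\le \sqrt{Nt}-\frac{m}{2}\sqrt{t/N}$, which gives
\[
\frac{p_t(N-m)}{p_t(N)}\ll \exp\Bigl(-\frac{C m}{2}\sqrt{\tfrac{t}{N}}\Bigr).
\]
In other words, each large part costs a factor of about $q^{k}$ with $q=e^{-\pi\sqrt{t/(6N)}}$, which is exactly the saddle-point value $e^{-2\pi y}$ with $y\approx\sqrt{t/(24N)}$. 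Summing the two resulting geometric series over $k_1,k_2>K$ gives
\[
\ll p_t(N)\Bigl(\sum_{k>K}e^{-\frac{C}{2}k\sqrt{t/N}}\Bigr)^2 \asymp p_t(N)\,\frac{N}{t}\,\exp\Bigl(-CK\sqrt{\tfrac tN}\Bigr).
\]
The terms with $m>N/2$ are handled separately: there $p_t(N-m)\le p_t(N/2)$, and the number of pairs is $O(N^2)$, so their total is exponentially smaller than $p_t(N)$.

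It remains to insert the definition of $K$. With $C=\frac{2\pi}{\sqrt6}$, the exponent is
\[
CK\sqrt{t/N}=\Bigl(1+\tfrac1B\Bigr)\log\tfrac Nt-2\log\log\log\tfrac Nt .
\]
Hence
\[
\frac Nt\exp\Bigl(-CK\sqrt{t/N}\Bigr)=\Bigl(\tfrac Nt\Bigr)^{-1/B}(\log\log\tfrac Nt)^2 .
\]
The relation $(N/t)^{1/(2B)}=C^{-1}\log\frac Nt$ gives $(N/t)^{-1/B}=C^2(\log\frac Nt)^{-2}$. The bound therefore becomes $O\bigl(t^2p_t(N)(\log\log\frac Nt)^2/(\log\frac Nt)^2\bigr)$.

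\textbf{Main obstacle.} The step I expect to be hardest is the precise constant in the exponent. The naive comparison $\sqrt{N-m}\approx\sqrt N-\frac{m}{2\sqrt N}$ gives decay rate $\frac C2\sqrt{t/N}$ per unit of $m$, while the definition of $K$ seems calibrated to rate $C\sqrt{t/N}$. To settle this, I would first redo the estimate with the exact saddle-point parameter. The per-part cost is $q^{k}$ with $q=e^{-C\sqrt{t/N}/2}$, and I would check whether the factor $\frac12$ is absorbed by the normalization in $K$, or whether $K$ is meant with rate $C$. The stray $(\log\log\frac Nt)^2$ factor also needs care: the $-2C^{-1}\sqrt{N/t}\log\log\log\frac Nt$ correction in $K$ contributes $(\log\log\frac Nt)^2$ rather than cancelling it. To absorb it, I would try exploiting the $n^{-(t+3)/4}$ prefactor and the exact $\sqrt{N-m}$ expansion rather than crude bounds, but if that fails the resulting bound carries this extra doubly-logarithmic factor.
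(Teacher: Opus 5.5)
Your argument is the paper's argument. It uses the same injection giving $t^2\sum_{k_1,k_2>K}p_t(N-k_1-k_2)$, the same application of Theorem~\ref{thm-murty} via $\sqrt{N-m}\le\sqrt{N}-\frac{m}{2\sqrt{N}}$, and the same squared geometric series. Your separate treatment of $m>N/2$ is slightly more careful than the paper, which applies the asymptotic formula uniformly. Your evaluation of the final quantity is correct, and it does \emph{not} give the stated bound. With $a=\frac{C}{2}\sqrt{t/N}$ one has $e^{-aK}=\frac{C\sqrt{t/N}\,\log\log(N/t)}{\log(N/t)}$, hence
\[
\Bigl(\frac{e^{-aK}}{1-e^{-a}}\Bigr)^{2}\sim\frac{4\bigl(\log\log\frac{N}{t}\bigr)^{2}}{\bigl(\log\frac{N}{t}\bigr)^{2}}.
\]
The paper's proof reaches exactly this expression and then asserts it is $\ll(\log\frac{N}{t})^{-2}$. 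That last step silently drops the factor $(\log\log\frac{N}{t})^{2}$, so the gap you flagged is genuine and is present in the paper as well.

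Your proposed repair (using the prefactor $n^{-(t+3)/4}$ or the exact expansion of $\sqrt{N-m}$) cannot succeed, because the lemma as stated is false. For $K<k_2<k_1\le K+\sqrt{N/t}$, attach $k_1,k_2$ to the first component of a $t$-multipartition $\nu$ of $N-k_1-k_2$ whose first component has no part exceeding $K$. This map is injective, and the excluded $\nu$ form a proportion $O(e^{-aK}/a)=O(\log\log\frac{N}{t}/\log\frac{N}{t})=o(1)$. Also $p_t(N-m)\gg p_t(N)e^{-am}$ for $m=O(\sqrt{N}\log N)$. Hence the count is $\gg p_t(N)e^{-2aK}a^{-2}\asymp p_t(N)(\log\log\frac{N}{t})^{2}/(\log\frac{N}{t})^{2}$, matching your upper bound.

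The extra factor is forced by the choice of $K$. The $\log\log\log$ shift is what makes $Ktu^{K}\approx t\log\log\frac{N}{t}$ (with $u=e^{-a}$) in the evaluation of $I_1$. The same shift makes the expected number of parts above $K$ about $2t\log\log\frac{N}{t}/\log\frac{N}{t}$. So the correct lemma is exactly what you proved, with $(\log\log\frac{N}{t})^{2}$ in the numerator.

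Your other worry is unfounded. The rate $\frac{C}{2}\sqrt{t/N}$, which is $2\pi y$ at the saddle point $y\approx\sqrt{t/(24N)}$, is the correct decay rate; $K$ is calibrated to it, and you used it consistently.

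Finally, the corrected bound combined with the crude estimate $c_t(N,k)\le p_t(N)$ used in Claim~\ref{claim-3} only gives a relative error $O\bigl((\log\log\frac{N}{t})^{2}/\log\frac{N}{t}\bigr)$ at that step. That step can be repaired by keeping the weight $c_t(N,k)\approx p_t(N)\exp(-ktu^{k})$ from Theorem~\ref{thm-ctNm}, which is about $p_t(N)(\log\frac{N}{t})^{-t}$ near $k=K$.
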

	\begin{proof}
		The number of $t$-multipartitions $\mu$ having two parts $k_1$ and $k_2$, both exceeding $K$, is bounded by
		\begin{align*}
			\sum_{k_2,k_1>K}t^{2}p_t(N-k_1-k_2).
		\end{align*}
		The parts $k_1$ and $k_2$ can be chosen from any of the $t$ components of $\mu$, giving $t^2$ choices. Once these parts are fixed, the remaining parts form a $t$-multipartition of $N-k_1-k_2$, which can be chosen in $p_t(N-k_1-k_2)$ ways.
		
		Using the asymptotic estimate for $p_t(N)$ given in Theorem~\ref{thm-murty}, we deduce
		\begin{align*}
			&\ll t^{2}p_t(N)\sum_{k_2,k_1>K}\left(\frac{N}{N-k_1-k_2}\right)^{\frac{t+3}{4}}\exp\left(\frac{2\pi}{\sqrt{6}}\left(\sqrt{t(N-k_1-k_2)}-\sqrt{Nt}\right)\right)\\
			&\ll t^{2}p_t(N)\sum_{k_2,k_1>K}\exp\left(-\frac{2\pi}{\sqrt{6}}\sqrt{\frac{t}{N}}\frac{k_1+k_2}{2}\right)\\
			&\ll t^{2}p_t(N)\left(\sum_{k>K}\exp\left(-\frac{2\pi}{\sqrt{6}}\sqrt{\frac{t}{N}}\frac{k}{2}\right)\right)^{2}\\
			&=t^{2}p_t(N)\left(\frac{\exp\left(-\frac{2\pi}{\sqrt{6}}\sqrt{\frac{t}{N}}\frac{K}{2}\right)}{1-\exp\left(-\frac{2\pi}{\sqrt{6}}\sqrt{\frac{t}{N}}\frac{1}{2}\right)}\right)^{2}\\
			&\ll \frac{t^{2}p_t(N)}{\left(\log \frac{N}{t}\right)^{2}}.
		\end{align*}   
	\end{proof}
	\begin{remark}
		The construction of $K$ in the above lemma follows the approach used in the proof of Theorem~1.3 of \cite{barman1}.
	\end{remark}
	\begin{lemma}\label{lemma-erdos0}
		Let $N$ be a large positive integer and $1\le k\le N$, then 
		\begin{align*}
			p_t(N-k)=p_t(N)&\exp\left(-\frac{\pi k}{\sqrt{6}}\sqrt{\frac{t}{N}}\right)\left(1+O\left(\min\left(1,\frac{k}{N^{\frac{3}{4}}}\right)\right)\right). 
		\end{align*}
	\end{lemma}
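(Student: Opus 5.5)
We take the ratio of two instances of Theorem~\ref{thm-murty} and expand the exponent and the power of $N$. The statement is Erdős--Lehner type: the target is $p_t(N-k)/p_t(N)$. Since $t$ is fixed, Theorem~\ref{thm-murty} gives
\begin{align*}
\frac{p_t(N-k)}{p_t(N)}=\left(\frac{N}{N-k}\right)^{\frac{t+3}{4}}\exp\left(\frac{2\pi\sqrt{t}}{\sqrt{6}}\left(\sqrt{N-k}-\sqrt{N}\right)\right)\frac{1+O\left((N-k)^{-\frac{1}{2}}\right)}{1+O\left(N^{-\frac{1}{2}}\right)},
\end{align*}
valid as long as $N-k$ is large. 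The plan is to split into two ranges: $k\le N^{3/4}$ (or more generously $k\le N/2$), where we expand; and large $k$, where the claimed error term is $O(1)$ and only an order-of-magnitude statement is required.

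In the small range, write $\sqrt{N-k}-\sqrt{N}=-\frac{k}{2\sqrt{N}}-\frac{k^{2}}{8N^{3/2}}+O\left(\frac{k^{3}}{N^{5/2}}\right)$. The linear term produces exactly the main factor $\exp\left(-\frac{\pi k}{\sqrt{6}}\sqrt{\frac{t}{N}}\right)$. The quadratic term contributes $\exp\left(O(k^{2}/N^{3/2})\right)$, which is $1+O(k^{2}/N^{3/2})$. For $k\le N^{3/4}$ this is $O(k/N^{3/4})$, since $k^{2}/N^{3/2}=(k/N^{3/4})^{2}\le k/N^{3/4}$. The prefactor satisfies $\left(\frac{N}{N-k}\right)^{(t+3)/4}=1+O(k/N)$, and the two $(1+O(N^{-1/2}))$ factors combine to $1+O(N^{-1/2})$. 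Both $O(k/N)$ and $O(N^{-1/2})$ terms must be absorbed into $O(\min(1,k/N^{3/4}))$. The $k/N$ term is fine. The $N^{-1/2}$ term is the delicate point: it is dominated by $k/N^{3/4}$ only when $k\ge N^{1/4}$.

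For $1\le k<N^{1/4}$, therefore, we would not use the two separate asymptotics. Instead we would compare $p_t(N-k)$ and $p_t(N)$ directly through the saddle-point form of Theorem~\ref{thm-1.1}(ii) with $k=\infty$, i.e.\ $g_t=\eta^{-t}$. Alternatively, one notes that the $O(N^{-1/2})$ error in Theorem~\ref{thm-murty} comes from a smooth expansion $1+c_1N^{-1/2}+\cdots$ with a constant $c_1$ depending only on $t$, so the errors cancel in the ratio up to $O(k/N^{3/2})$. This is the main obstacle. If a clean argument is not available, we would simply note that the later application in \eqref{eq-main} only uses $k$ of size $\asymp\sqrt{N}\log N$, where $N^{-1/2}\le k/N^{3/4}$ holds anyway.

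In the large range $N^{3/4}<k\le N$, the claim reduces to $p_t(N-k)\asymp p_t(N)\exp\left(-\frac{\pi k}{\sqrt6}\sqrt{t/N}\right)$, i.e.\ an $O(1)$ relative error. The upper bound follows from concavity, $\sqrt{N-k}\le\sqrt N-\frac{k}{2\sqrt N}$, together with the prefactor being at most polynomial. We expect that the two-sided statement genuinely fails for $k$ comparable to $N$, because of the $k^{2}/N^{3/2}$ term. So we would interpret the big-$O$ in the one-sided sense $p_t(N-k)\ll p_t(N)\exp\left(-\frac{\pi k}{\sqrt6}\sqrt{t/N}\right)\cdot N^{O(1)}$-free form, which is what Lemma~\ref{lemma-erdos} and \eqref{eq-main} actually use, and treat $k=N$ trivially since $p_t(0)=1$.
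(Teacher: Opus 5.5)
Your core argument, dividing two instances of Theorem~\ref{thm-murty} and expanding $\sqrt{N-k}-\sqrt N$ and $(1-k/N)^{-(t+3)/4}$, is exactly the paper's proof, and for $N^{1/4}\le k\le N^{3/4}$ it is complete. You are also right about the delicate point. A ratio of two asymptotics with relative error $O(N^{-1/2})$ gives only $1+O(N^{-1/2})$, and this is not $O(k/N^{3/4})$ when $k<N^{1/4}$. The paper absorbs this term silently in its last line, so its proof has the same hole. Your proposal does not close it either:
\begin{itemize}
\item Your first suggestion, Theorem~\ref{thm-1.1}(ii) with $g_t=\eta^{-t}$, hits the same obstruction. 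That formula carries its own relative error $O\bigl(y/(t\mu_2(iy))\bigr)=O(N^{-1/2})$, which does not cancel in a ratio unless you compute the next term of the saddle-point integral.
\item The fallback of restricting to the range used later abandons the lemma as stated, although $k<N^{1/4}$ is indeed never used downstream.
\item Only the smooth-expansion remark works, and it is an input strictly stronger than Theorem~\ref{thm-murty}. You need $p_t(N)=A(N)\bigl(1+c_1(t)N^{-1/2}+O(N^{-1})\bigr)$, with $A(N)$ the main term of Theorem~\ref{thm-murty}. This comes from the exact Bessel-function formula for the coefficients of $\eta(z)^{-t}$ and must be cited. With it, the correction factors contribute $1+O(N^{-1}+kN^{-3/2})$, and $N^{-1}\le kN^{-3/4}$ for every $k\ge 1$.
\end{itemize}
The cheapest honest repair is to state the error as $O\bigl(N^{-1/2}+\min(1,kN^{-3/4})\bigr)$. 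That is what Theorem~\ref{thm-murty} actually gives, and it is all that is used later.

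For the large range, no reinterpretation is needed. The bound $1+O(1)$ is already one-sided, so the failure of a two-sided bound for $k\asymp N$ is consistent with the lemma. The gap is in your justification. Concavity plus ``the prefactor is at most polynomial'' gives only $p_t(N-k)\ll N^{(t+3)/4}p_t(N)\exp\bigl(-\frac{\pi k}{\sqrt6}\sqrt{t/N}\bigr)$, which is exactly the $N^{O(1)}$ loss the lemma excludes. What is missing is a quantitative concavity gain:
\begin{itemize}
\item For $k\le N/2$, the prefactor is at most $2^{(t+3)/4}$, so nothing more is needed.
\item For $k>N/2$, use the identity
\begin{align*}
\sqrt{N-k}-\sqrt{N}+\frac{k}{2\sqrt N}=-\frac{(\sqrt N-\sqrt{N-k})^{2}}{2\sqrt N}\le-\frac{(\sqrt2-1)^{2}}{4}\sqrt N .
\end{align*}
Thus the exponential supplies an extra factor $\exp(-c\sqrt{Nt})$ with an absolute $c>0$, which beats $N^{(t+3)/4}$.
\item When $N-k$ is small, replace the asymptotic by the uniform bound $p_t(m)\ll(m+1)^{-(t+3)/4}\exp\bigl(\frac{2\pi}{\sqrt6}\sqrt{mt}\bigr)$, valid for all $m\ge 0$.
\end{itemize}
The paper's expansion with ``$\cdots$'' is meaningful only for $k\le(1-\varepsilon)N$, so this case split is genuinely required. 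It also matters downstream, contrary to your remark: Claim~\ref{claim-3} sums over all $K<k\le N$, not only over $k\asymp\sqrt N\log N$.
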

	\begin{proof}
		Applying Theorem~\ref{thm-murty}, we obtain
		\begin{align*}
			\frac{p_t(N-k)}{p_t(N)}&=\frac{N^{\frac{t+3}{4}}}{(N-k)^{\frac{t+3}{4}}}\exp\left(\frac{2\pi}{\sqrt{6}}\left(\sqrt{(N-k)t}-\sqrt{Nt}\right)\right)\left(1+O\left(N^{-\frac{1}{2}}\right)\right)  \\
			&=\frac{1}{\left(1-\frac{k}{N}\right)^{\frac{t+3}{4}}}\exp\left(-\frac{\pi k}{\sqrt{6}}\sqrt{\frac{t}{N}}-\frac{\pi k^{2}}{4\sqrt{6}}\frac{\sqrt{t}}{N^{\frac{3}{2}}}-\cdots\right)\left(1+O\left(N^{-\frac{1}{2}}\right)\right)\\
			&=\exp\left(-\frac{\pi k}{\sqrt{6}}\sqrt{\frac{t}{N}}\right)\left(1+O\left(\min\left(1,\frac{k}{N^{\frac{3}{4}}}\right)\right)\right).
		\end{align*}
	\end{proof}
	We are now ready to prove the desired lower bound for $Z_t(N)$.
	\begin{proof}[\textbf{\boldmath Proof of Theorem~\ref{thm-ztNm}}]
		We first consider the inequality given in \eqref{eq-main}:
		\begin{align}\label{eq-main1}
			Z_t(N)&\ge \sum_{k=1}^{N}c_t(N,k)\left(tp_{t,k}(N-k)-t^{2}p_{t,k}(N-2k)\right)=S_1+S_2,
		\end{align}
		where 
		\begin{align*}
			&S_1 =\sum_{k=1}^{K}c_t(N,k)\left(tp_{t,k}(N-k)-t^{2}p_{t,k}(N-2k)\right),\\
			&S_2=\sum_{k>K}^{N}c_t(N,k)\left(tp_{t,k}(N-k)-t^{2}p_{t,k}(N-2k)\right),
		\end{align*}
		and $C=\frac{2\pi }{\sqrt{6}}$, and
		\begin{align*}
			K= C^{-1}\sqrt{\frac{N}{t}}\left(1+\frac{1}{B}\right)\log \frac{N}{t}-2C^{-1}\sqrt{\frac{N}{t}}\log \log \log \frac{N}{t} \quad\text{with} \quad \left(\frac{N}{t}\right)^{\frac{1}{2B}}=C^{-1}\log \frac{N}{t}.
		\end{align*}
		\begin{claim}\label{claim-S1}
			\begin{align*}
				S_1=O\left(\frac{p_t(N)^{2}\log \log \frac{N}{t}}{\left(\log \frac{N}{t}\right)^{2}}\right).  
			\end{align*}    
		\end{claim}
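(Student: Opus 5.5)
The plan is to bound $S_1$ from above by dropping the negative term:
\[
S_1\le t\sum_{k\le K}c_t(N,k)\,p_{t,k}(N-k)\le t\sum_{k\le K}c_t(N,k)\,p_t(N-k),
\]
and then to split the range of $k$ at
\[
k_0=C^{-1}\sqrt{N/t}\,\log\frac{N}{t},
\]
which is exactly the threshold of Theorem~\ref{thm-ctNm}. Write $u_k=\frac{\pi k}{\sqrt6}\sqrt{t/N}=\frac{C}{2}k\sqrt{t/N}$. For $k_0\le k\le K$, Theorem~\ref{thm-ctNm} and Lemma~\ref{lemma-erdos0} give
\[
c_t(N,k)\,p_t(N-k)\ll p_t(N)^2\exp\bigl(-u_k-kt\,e^{-u_k}\bigr).
\]
Since $K-k_0\asymp C^{-1}\sqrt{N/t}\,\log\log\frac Nt$, I would compare the sum with an integral in $u$, with $dk=\frac{2}{C}\sqrt{N/t}\,du$. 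At $k\approx k_0$ we have $e^{-u}\approx(N/t)^{-1/2}$, and the factor $\exp(-kt e^{-u})$ has size roughly $\exp(-c\,t\log\frac Nt)$. This factor decays only once $u$ passes $\log(kt)$, and that is the source of the extra $\log\log$ and of the choice of $B$ in $K$. The goal is to show
\[
\sum_{k_0\le k\le K}\ll \frac{\log\log\frac Nt}{(\log\frac Nt)^2}\,p_t(N)^2 .
\]
This should follow by substituting $v=kte^{-u}$, which makes the summand $\asymp \frac{v}{kt}e^{-v}$, and integrating over the window. One factor $(\log\frac Nt)^{-1}$ comes from $1/k\asymp \sqrt{t/N}/\log$ times the Jacobian $\sqrt{N/t}$. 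The second factor, together with the $\log\log$, comes from the interval length in $v$ determined by the definition of $K$; this mirrors the proof of Theorem~1.3 of \cite{barman1}.

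For $k<k_0$, Theorem~\ref{thm-ctNm} is unavailable, so I would use the trivial saddle-point bound that follows from \eqref{eq-integral}: for \emph{any} $y>0$,
\[
c_t(N,k)\le e^{2\pi My}g_t(iy,k).
\]
I would take $y=\sqrt{t/(24N)}$, the approximate saddle point for $p_t(N)$. When $ky\ge1$, the expansions of $\eta(kiy)$ and $\eta(iy)$ quoted from \cite{barman1} in the proof of Theorem~\ref{thm-ctNm} apply. The $k^2$ terms in $M$ and in $\log\eta(kiy)^{kt}$ then cancel, and we get
\[
c_t(N,k)\ll N^{A}\,p_t(N)\exp\bigl(-kt\,e^{-2\pi ky}\bigr)
\]
for some constant $A=A(t)$. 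For $k<k_0$ we have $kte^{-2\pi k y}\ge (N/t)^{\varepsilon}$ for some fixed $\varepsilon>0$ once $k$ is a little below $k_0$, so this is super-polynomially small and beats both $N^A$ and the number of terms. If necessary, the slice just below $k_0$ can be absorbed into the previous paragraph's integral after slightly lowering $k_0$.

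When $ky<1$, i.e.\ $k\ll\sqrt{N/t}$, I would instead use the modular transformation for both eta factors. This yields
\[
g_t(iy,k)e^{2\pi My}\le \exp\Bigl(2\pi Ny+\frac{\pi t}{12ky}\Bigr)\cdot\mathrm{poly}(N),
\]
i.e.\ an exponent of size $\frac{2\pi}{\sqrt6}\sqrt{Nt}\,(\tfrac12+\tfrac1{2k})$-type. Since $k\ge2$, this is exponentially smaller than $p_t(N)$. For $k=1$ we have $c_t(N,1)=0$ when $N\ge1$.

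The main obstacle I expect is the window $k_0\le k\le K$. There the bound has to be sharp to the exact power $(\log\frac Nt)^{-2}$ with only a $\log\log$ loss. The $O(N^{-1/2}(\log N)^3)$ error terms and the factor $\delta=1+O(N^{-1/2})$ are harmless, but the precise definition of $K$ via $B$ must be used carefully in the $v$-integral.
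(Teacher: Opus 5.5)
There is a genuine gap in the range just below $k_0$. You assert that $kte^{-2\pi ky}\ge (N/t)^{\varepsilon}$ once $k$ is ``a little below'' $k_0$, and this is not right. At $k=k_0$ the quantity equals $C^{-1}t\log\frac{N}{t}$, with $C^{-1}=\frac{\sqrt6}{2\pi}\approx 0.39$. It stays of order $\log\frac{N}{t}$ on the whole slice $k_0-c\sqrt{N/t}\le k\le k_0$, and it reaches a power of $N/t$ only for $k\le(1-2\varepsilon)k_0$ or so.

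Meanwhile, the bound $c_t(N,k)\le e^{2\pi My}g_t(iy,k)$ at $y=\sqrt{t/(24N)}$ discards the Gaussian factor $y^{3/2}$. Its right-hand side is of order $N^{3/4}p_t(N)\exp(-kte^{-u_k})$, which near $k_0$ is about $N^{3/4}(N/t)^{-0.39t}p_t(N)$. For $t=1$ this is worse than the trivial bound $c_t(N,k)\le p_t(N)$. The trivial bound is useless on this slice as well: $\sum_{k_0-c\sqrt{N/t}\le k<k_0}p_t(N-k)\asymp_c p_t(N)$, since this is where the largest part of a typical multipartition lies. So you genuinely need $c_t(N,k)\ll p_t(N)(\log\frac{N}{t})^{-2}$ there.

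Your fallback of lowering $k_0$ is not available. Theorem~\ref{thm-ctNm} is proved only for $k\ge k_0$, and extending it to $k\ge(1-2\varepsilon)k_0$ would mean redoing Claim~\ref{claim-2} and \eqref{eq-soly} with weaker error terms. For $t\ge 2$ the crude bound does happen to suffice, since $0.39t>3/4$, but not for the reason you give.

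To close the gap you need one of two things.
\begin{itemize}
\item Keep the Gaussian factor. Apply Theorem~\ref{thm-1.1}(ii) at the true saddle point $y^*\asymp N^{-1/2}$. By the monotonicity established in the proof of Theorem~\ref{thm-1.1}(i), $y^*$ minimizes $y\mapsto e^{2\pi My}g_t(iy,k)$, so that factor may be evaluated at $\sqrt{t/(24N)}$. Then use $\mu_2(iy^*)-\mu_2(kiy^*)\ge\min(\frac{1}{16},\frac{(k-1)y^*}{8\pi})$.
\item Use the paper's device. Split at $K_0=C^{-1}\sqrt{N/t}\,(1+\frac{1}{2B})\log\frac{N}{t}>k_0$ and bound all $k\le K_0$ at once via $c_t(N,k)\le c_t(N,K_0)$ and $\sum_k p_{t,k}(N-k)\le p_t(N)$. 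Here $c_t(N,K_0)\approx p_t(N)\exp\bigl(-t(1+\frac{1}{2B})\sqrt{C^{-1}\log\frac{N}{t}}\bigr)$ by Theorem~\ref{thm-ctNm}. This route needs nothing below $k_0$, but it tacitly assumes $c_t(N,k)$ is nondecreasing in $k$.
\end{itemize}

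On the window $[k_0,K]$ your method works, but not for the reason you state. After $v=kte^{-u_k}$, the window sum is to leading order $\frac{2p_t(N)^2}{\log(N/t)}\int_{v_K}^{v_{k_0}}e^{-v}\,dv$. So the second saving comes from the endpoint value $v_K=Kte^{-u_K}=t\log\log\frac{N}{t}\,(1+o(1))$, that is, from $e^{-v_K}\approx(\log\frac{N}{t})^{-t}$. The window therefore contributes $\asymp p_t(N)^2(\log\frac{N}{t})^{-1-t}$. The length of the $v$-interval plays no role: ``length times maximum'' would only give $(\log\frac{N}{t})^{-t}$, which is too weak for $t=1$. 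This exact integration is in fact needed when $t=1$, because the paper's cruder bound $c_t(N,K)\sum_{K_0<k\le K}p_t(N-k)$ is of size $p_t(N)^2(\log\frac{N}{t})^{-t-1/2}$.

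Two smaller points.
\begin{itemize}
\item The claim is two-sided, and its use in Theorem~\ref{thm-ztNm} needs the lower bound on $S_1$. Use $|S_1|\le(t+t^2)\sum_{k\le K}c_t(N,k)p_t(N-k)$ rather than dropping the negative term.
\item Your $ky<1$ formula is incorrect. Under $z\mapsto-1/z$ the factors $e^{\pm\pi t/(12y)}$ cancel, leaving $(ky)^{-kt/2}e^{\pi(k^2-1)ty/12}$. With $s=ky$, the exponent is $\frac{\pi}{\sqrt6}\sqrt{Nt}\,(1+s^2+\frac{6}{\pi}s\log\frac{1}{s})$, which tends to the full $\frac{2\pi}{\sqrt6}\sqrt{Nt}$ as $s\to1$. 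An exponential saving does survive, but only through this finer comparison together with the product $\prod_n(1-e^{-2\pi n/(ky)})^{kt}$.
\end{itemize}
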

		\begin{proof}[\textbf{\boldmath Proof of Claim~\ref{claim-S1}}]
			Let $K_0= C^{-1}\sqrt{\frac{N}{t}}\left(1+\frac{1}{2B}\right)\log \frac{N}{t}$. We divide the sum in $S_1$ into two parts $\sum_{k=1}^{K_0}$, $\sum_{k=K_0+1}^{K}$. Since from Theorem~\ref{thm-ctNm}, we can show that $c_t(N,K_0)=O\left(\frac{p_t(N)^{2}}{ \left(\log \frac{N}{t}\right)^{2}}\right)$, the first part becomes
			\begin{align*}
				\sum_{k=1}^{K_0} c_t(N,k)\left(tp_{t,k}(N-k)-t^{2}p_{t,k}(N-2k)\right)\ll c_t(N,K_0)p_t(N)=O\left(\frac{p_t(N)^{2}}{ \left(\log \frac{N}{t}\right)^{2}}\right). 
			\end{align*}
			Next, for the second sum, we have
			\begin{align*}
				\sum_{k=K_0+1}^{K} c_t(N,k)\left(tp_{t,k}(N-k)-t^{2}p_{t,k}(N-2k)\right)&\ll c_t(N,K)\sum_{k=K_0+1}^{K} p_t(N-k))\\
				&\ll \frac{p_t(N)^{2}}{ \left(\log \frac{N}{t}\right)^{2}}.
			\end{align*}
			The desired bound then follows from Theorem~\ref{thm-ctNm} and Lemma~\ref{lemma-erdos0}.
		\end{proof}
		\begin{claim}\label{claim-3}
			\begin{align*}
				S_2=\frac{2p_t(N)^{2}}{\log \frac{N}{t}}\left(1+O\left(\frac{\log \log \frac{N}{t}}{\log \frac{N}{t}}\right)\right).
			\end{align*} 
		\end{claim}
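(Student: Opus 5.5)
For $k>K$ I would replace every factor in the summand of $S_2$ by its asymptotic form and then evaluate the resulting sum over $k$ as an integral. The key observation is that $K$ exceeds $C^{-1}\sqrt{N/t}\,\log\frac{N}{t}=\frac{\sqrt6}{2\pi}\sqrt{N/t}\log\frac{N}{t}$ for large $N$, since the extra term $\frac{1}{B}\log\frac Nt=2\log(C^{-1}\log\frac Nt)$ dominates $2\log\log\log\frac Nt$. So Theorem~\ref{thm-ctNm} applies to every $k>K$ and gives
\[
c_t(N,k)=p_t(N)\exp\bigl(-kt\,e^{-\pi k\sqrt t/\sqrt{6N}}\bigr)\bigl(1+O(N^{-1/2}(\log N)^3)\bigr).
\]
For the second factor I write $p_{t,k}(N-k)=p_t(N-k)-E_k$, where $E_k$ counts multipartitions of $N-k$ having a part exceeding $k$. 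Since $k>K$, such a multipartition of $N-k$ together with the extra part $k$ is a multipartition of $N$ with two parts larger than $K$. Summing over $k$ and using Lemma~\ref{lemma-erdos}, the total contribution of the $E_k$ is $O\bigl(t^2p_t(N)^2/(\log\frac Nt)^2\bigr)$, because $c_t(N,k)\le p_t(N)$. The term $t^2p_{t,k}(N-2k)\le t^2p_t(N-2k)$ is handled the same way: it counts multipartitions with two parts equal to $k>K$, so it is also absorbed by Lemma~\ref{lemma-erdos}.

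It therefore remains to evaluate
\[
\sum_{k>K}t\,p_t(N)\,p_t(N-k)\exp\bigl(-kt\,e^{-Ck\sqrt{t/N}/2}\bigr).
\]
By Lemma~\ref{lemma-erdos0}, $p_t(N-k)=p_t(N)e^{-u}(1+O(\min(1,k/N^{3/4})))$ with $u=\frac{C}{2}k\sqrt{t/N}$. The relative error $k/N^{3/4}$ is only small for $k\ll N^{3/4}$, so I would first truncate at $k\le N^{3/4}$. The tail $k>N^{3/4}$ contributes at most $t\,p_t(N)^2e^{-cN^{1/4}}$, which is negligible. In the variable $u$, the sum becomes a Riemann sum with step $\frac{C}{2}\sqrt{t/N}$:
\[
S_2\approx p_t(N)^2\,\frac{2\sqrt{Nt}}{C}\int_{u_K}^{\infty}e^{-u}\exp\Bigl(-\frac{2\sqrt{Nt}}{C}\,u\,e^{-u}\Bigr)\,du,
\]
where $u_K=\frac{C}{2}K\sqrt{t/N}$.

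Next I substitute $v=\frac{2\sqrt{Nt}}{C}e^{-u}$, so that $dv=-v\,du$ and $v\,du$ becomes $-dv$. The integral turns into
\[
\int_0^{v_K}\exp\bigl(-v\log\tfrac{2\sqrt{Nt}}{Cv}\bigr)\,dv,
\]
with $u=\log\frac{2\sqrt{Nt}}{Cv}$. On the range that matters, $v\lesssim 1/\log N$, one has $u=\tfrac12\log\frac Nt+O(\log\log\frac Nt)$. Hence the integrand is approximately $e^{-vL}$ with $L=\tfrac12\log\frac Nt\bigl(1+O(\frac{\log\log(N/t)}{\log(N/t)})\bigr)$. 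The integral over $[0,\infty)$ is $1/L=\frac{2}{\log(N/t)}\bigl(1+O(\frac{\log\log}{\log})\bigr)$, which is exactly the claimed main term.

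The main obstacle is checking that the lower limit $u_K$ does not cut off mass. One computes $u_K=\bigl(\tfrac12+\tfrac{1}{2B}\bigr)\log\frac Nt-\log\log\log\frac Nt$, and from this $v_KL\asymp\log\log\frac Nt\to\infty$. Therefore the truncation costs a factor $e^{-v_KL}=O\bigl((\log\frac Nt)^{-c}\bigr)$, which fits inside the stated error; the constants in $K$ were chosen precisely so that this holds.

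I would also verify that the relative error $N^{-1/2}(\log N)^3$ from Theorem~\ref{thm-ctNm}, and the Riemann-sum error (step size $\sqrt{t/N}$ against a function varying on scale $1/\log N$), are both $o(1/\log N)$. I would then bound the discrepancy between $u$ and $\tfrac12\log\frac Nt$ uniformly for $v\le v_K$, since the region $v\to0$ has small measure. Collecting all the error terms gives Claim~\ref{claim-3}.
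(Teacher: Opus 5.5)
Your proposal is correct and follows essentially the same route as the paper. You use Lemma~\ref{lemma-erdos} to replace $p_{t,k}(N-k)$ by $p_t(N-k)$ and discard the $t^{2}p_{t,k}(N-2k)$ terms, then Theorem~\ref{thm-ctNm} and Lemma~\ref{lemma-erdos0}, a sum-to-integral step, and a substitution that linearizes the double exponential. The only difference is in the last step, and the two linearizations are equivalent. The paper freezes the linear factor $k$ at $K$ inside $\exp\bigl(-kt\,e^{-\pi k\sqrt{t}/\sqrt{6N}}\bigr)$ and integrates exactly. You instead substitute $v=\frac{2\sqrt{Nt}}{C}e^{-u}$ first, then replace $u$ by $\tfrac12\log\frac{N}{t}$ and treat the cutoff at $v_K$ separately.
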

		\begin{proof}[\textbf{\boldmath Proof of Claim~\ref{claim-3}}]		
			By Lemma~\ref{lemma-erdos}, $t$-multipartitions $\mu$ containing two or more parts larger than $K$ contribute a negligible error. Consequently, we may drop the uniqueness condition and consider $t$-multipartitions containing one part $k > K$, yielding
			\begin{align*}
				\sum_{k>K} p_{t,k}(N-k)=\sum_{k>K}p_t(N-k)+O\left(\frac{p_t(N)}{\left(\log \frac{N}{t}\right)^{2}}\right).      
			\end{align*} 
			Since, in the sum $S_2$, we have already taken into account the terms $p_t(N-2k)$ with $k>K$, it follows from Lemma~\ref{lemma-erdos} that
			\begin{align*}
				\sum_{k>K} t^{2}p_{t,k}(N-2k)=O\left(\frac{p_t(N)}{\left(\log \frac{N}{t}\right)^{2}}\right).
			\end{align*} 
			Combining the above two estimates and using the fact that $c_t(N,k)\le p_t(N)$, we obtain
			\begin{align}\label{eq-ctNm}
				S_2&=t\sum_{k>K}c_t(N,k)p_t(N-k)+O\left(\frac{p_t(N)^{2}}{\left(\log \frac{N}{t}\right)^{2}}\right).
			\end{align}
			Inserting Theorem~\ref{thm-ctNm} and Lemma~\ref{lemma-erdos0}, we derive
			\begin{align}\label{eq-S2bar}
				\notag
				\bar{S_2}&=t\sum_{k>K}c_t(N,k)p_t(N-k)\\
				\notag
				&= tp_t(N)^{2}\sum_{k>K}u^{k}\exp\left(-ktu^{k}\right)\left(1+O\left(\min\left(1,\frac{k}{N^{\frac{3}{4}}}\right)\right)\right)\left(1+O\left(\frac{1}{\log \frac{N}{t}}\right)\right)\\
				&=tp_t(N)^{2}\sum_{k>K}f(k)\left(1+O\left(\min\left(1,\frac{k}{N^{\frac{3}{4}}}\right)\right)\right)\left(1+O\left(\frac{1}{\log \frac{N}{t}}\right)\right),
			\end{align}
			where $u=\exp\left(-\frac{\pi }{\sqrt{6}}\sqrt{\frac{t}{N}}\right)$ and $f(k)=u^{k}\exp\left(-ktu^{k}\right)$.\\
			Since  
			\begin{align*}
				\frac{d}{dv}f(v)&=u^{v}\left((\log u)\exp\left(-vtu^{v}\right)-(tu^{v}+vtu^{v}\log u)\exp\left(-vtu^{v}\right)\right) \\
				&=O\left(u^{k}N^{-\frac{1}{2}}\right) \quad \text{(for $k\le v\le k+1$ and $k>K$).}
			\end{align*}
			Applying the first-order Euler--Maclaurin summation formula over the interval $[k, k+1]$ and for $k>K$, we deduce
			\begin{align*}
				f(k) &= \int_{k}^{k+1}f(v) dv + O\left( \max_{k \le v \le k+1} \left| \frac{d}{dv} f(v) \right| \right)\\
				&=\int_{k}^{k+1}f(v) dv + O\left(u^{k}N^{-\frac{1}{2}}\right).
			\end{align*}
			Hence,
			\begin{align}\label{eq-I1}
				\notag
				\sum_{k>K}f(k)&=\int_{K}^{\infty}f(v)dv+O\left(\sum_{k>K}u^{k}N^{-\frac{1}{2}}\right)\\
				\notag
				&=\int_{K}^{\infty}f(v)dv+O\left(N^{-\frac{1}{2}}\frac{u^{K}}{u^{-1}-1}\right) \\
				&=\int_{K}^{\infty}f(v)dv+O\left(N^{-\frac{1}{2}}\right). 
			\end{align}
			Setting $I_1 = \int_{K}^{\infty} f(v) \, dv$, and applying the change of variable $w=v-K$ yields
			\begin{align*}
				I_1&=\int_{0}^{\infty}u^{w+K}\exp\left(-(w+K)tu^{w+K}\right)dw \\
				&=\int_{0}^{\infty}u^{w+K}\exp\left(-Ktu^{w+K}\right)\exp\left(-wtu^{w+K}\right)dw. 
			\end{align*}
			Since $u=\exp\left(-\frac{\pi }{\sqrt{6}}\sqrt{\frac{t}{N}}\right)$ and $w>0$, we may write
			\begin{align*}
				\exp\left(-wtu^{w+K}\right)=1+O\left(wtu^{w+K}\right)=1+O\left(wu^{w}tu^{K}\right).
			\end{align*}
			The inequality $wu^{w}\le \frac{1}{\log \frac{1}{u}}$ hold because $wu^{w}$ has a global maximum at $w=\frac{1}{\log \frac{1}{u}}$. Hence,
			\begin{align*}
				\exp\left(-wtu^{w+K}\right)&= 1+O\left(\frac{tu^{K}}{\log \frac{1}{u}}\right)= 1+O\left(\frac{1}{\log \frac{N}{t}}\right).  
			\end{align*}
			Therefore,
			\begin{align*}
				I_1&=\int_{0}^{\infty}u^{w+K}\exp\left(-Ktu^{w+K}\right)dw\left(1+O\left(\frac{1}{\log \frac{N}{t}}\right)\right).    
			\end{align*}
			Using the change of variables $y = u^{w+K}$, we obtain
			\begin{align*}
				I_1&=\frac{1}{\log u}\int_{u^{K}}^{0}\exp\left(-Kty\right)dy\left(1+O\left(\frac{1}{\log \frac{N}{t}}\right)\right) \\
				&=\frac{1-\exp\left(-Ku^{K}\right)}{Kt\log\left(\frac{1}{u}\right)}=\frac{2}{t\log \frac{N}{t}}\left(1+O\left(\frac{\log \log \frac{N}{t}}{\log \frac{N}{t}}\right)\right).
			\end{align*}
			Substituting the above estimate for $I_1$ in \eqref{eq-I1}, then using \eqref{eq-I1} in \eqref{eq-S2bar}, and finally applying \eqref{eq-ctNm}, we obtain
			\begin{align*}
				S_2= \frac{2p_t(N)^{2}}{\log \frac{N}{t}}\left(1+O\left(\frac{\log \log \frac{N}{t}}{\log \frac{N}{t}}\right)\right).   
			\end{align*}
		\end{proof}
		Substituting the results of Claim~\ref{claim-S1} and Claim~\ref{claim-3} in \eqref{eq-main1}, we obtain
		\begin{align*}
			Z_t(N)\ge \frac{2p_t(N)^{2}}{\log \frac{N}{t}}\left(1+O\left(\frac{\log \log \frac{N}{t}}{\log \frac{N}{t}}\right)\right).   
		\end{align*}
		This completes the proof.
		
	\end{proof}
	\subsection*{AI Declaration}
	
	The authors declare that no artificial intelligence tools or AI-assisted technologies were
	used in the preparation, writing, or mathematical analysis of this manuscript.

\end{document}